\DeclareOldFontCommand{\bf}{\normalfont\bfseries}{\mathbf}
\DeclareOldFontCommand{\cal}{\normalfont\bfseries}{\mathcal}
\DeclareMathOperator\CL{CL}
\DeclareMathOperator*\argmax{arg\,max}
\DeclareMathOperator\Li{Li}
\DeclareMathOperator\Ls{Ls}
\newtheorem{assumption}[theorem]{Assumption}
\def\RR{{\mathbb R}}
\def\B{{\mathcal B}}
\def\X{{\mathcal X}}
\begin{document}

\title{Robust Contracting in General Contract Spaces\thanks{Horst and Beissner gratefully acknowledge financial support by the CRC TRR 190 \sl{Rationality and competition - the economic performance of individuals and firms}
}
}


\author{ Julio Backhoff-Veraguas   \qquad     \and \qquad \newline
        Patrick Beissner \and Ulrich Horst
}

\authorrunning{Julio Backhoff-Veraguas, Patrick Beissner, Ulrich Horst} 

\institute{Julio Backhoff-Veraguas  \at
              University of Vienna, Faculty of Mathematics. Austria.\\
              \email{julio.backhoff@univie.ac.at}            \\
           \and
           Patrick Beissner \at
              The Australian National University, Research School of Economics. Australia.\\
              \email{patrick.beissner@anu.edu.au}           \\ 
              \and
           Ulrich Horst \at
              Humboldt-Universit\"at zu Berlin, Department of Mathematics and School of Business and Economics. Germany. \\
              \email{horst@math.hu-berlin.de}            \\
}

\date{\today}

\maketitle

\begin{abstract}
We consider a general framework of optimal mechanism design under {adverse selection} and ambiguity about the type distribution of agents. We prove the existence of optimal mechanisms under minimal assumptions on the contract space and prove that centralized contracting implemented via mechanisms  is equivalent to delegated contracting implemented via a contract menu under these assumptions. Our abstract existence results are applied to a series of applications that include models of optimal risk sharing and of optimal portfolio delegation. 
%
%

\keywords{robust contracts \and nonmetrizable contract spaces \and ambiguity \and financial markets }
\end{abstract}

\section{Introduction}

We  consider a model of optimal mechanism design under adverse selection under minimal assumptions on the contract space. We allow the agent's preferences to be type-dependent (private information of the agent) and the principal to hold ambiguous beliefs about the distribution of these types. The principal's preferences for ambiguity  are captured by the maxmin expected utility model of \cite{gilboa1989maxmin} or, more generally,  by variational preferences  \`a la \cite{maccheroni2006ambiguity}. 
Our main result establishes the existence of optimal contracts under minimal assumptions on the contract space and the set of beliefs. Importantly, we do not require the contract space to be metrizable. Nonmetrizable contract spaces arise naturally in principal-agent models when  the utility levels of the agent are the relevant contract variables. The contracting problem over the original contracting space, and the one over the space of the resulting utility levels, are equivalent under mild conditions on the agent's utility function.  Quantifying  the  decision variable of the principal accounted in utility units of the agent is a well-known approach; see \cite{spear1987repeated}, \cite{schattler1993first} and more recently  \cite{sannikov2008continuous}. In our case, the transformation has a  different purpose. After the transformation, the agent's utility, as  a function of type and contract, reduces to a bilinear form on the contract/type space pair. This significantly simplifies the incentive compatibility constraint. Moreover, conditions on the continuity of the agent's utility boil down to the joint continuity of this bilinear form. As a result, a solvable model of optimal contracts with  large type (contract) space typically only allows for a small contract (type) space. This dichotomy is well known for the  commodity-price duality in general equilibrium analysis. We  show in a series of applications that this seemingly mathematical generalization is of economic relevance. Our applications include models of optimal reinsurance and market optimized risk sharing, and models of optimal portfolio delegation. The latter models combine adverse selection and moral hazard effects. 

 {Our  abstract existence result  extends \cite{Page1992}\footnote{In this study,  a  characterization of incentive compatibility is established in a  Polish  type space  combined with a compact metric contract space. 
} by dropping the assumption of a metrizable contract space. {The central idea of this line of reasoning is to consider contract menus (i.e.\ closed subsets of contracts) rather than contract mechanisms (i.e.\ functions). However,} {without a  metric on the contract space, we must substitute the usual Hausdorff metric topology on the set of contract menus  by the {\em Fell topology} (the topology of closed convergence) under which the hyperspace of all closed subsets is compact, as long as  the contract space is compact.\footnote{Appendix \ref{abstract result} provides a detailed account of all involved topological concepts.} Working with the space of closed subsets under  the Fell topology,  we can show continuity of the utility functions defined on sets  and hence the existence of an optimal contract menu. 
Subsequently, we prove that delegated contracting implemented via  contract menus is equivalent to centralized contracting implemented via contract mechanisms. This equivalence is not an obvious result, because the usual measurable selection theorems on which the proof of the revelation principle is based, do not apply in nonmetrizable spaces.}


{
Our framework is flexible enough to allow the principal to hold uncertain beliefs about the type of the agent. As such this paper also contributes to the growing literature on robust design of incentives. Following \cite{carroll2019robustness} this literature can be roughly decomposed as follows. First, ``robustness to technology or preferences'' where a principal does not exactly know the agent's possible actions and/or the distribution over relevant output variables; see \cite{carroll2015robustness}, \cite{frankel2014aligned}, \cite{garrett2014robustness} for more details. Second, ``robustness to beliefs and strategic behavior'', where agents hold ambiguous beliefs about each other. For instance, \cite{carroll2018information} considers a planner designing a mechanism for trade who is unsure about the beliefs that other players have about each other, and whose objective is worst-case expected welfare. \cite{BrooksDu2020}, following up on \cite{Du2018} consider robustness to beliefs in common-value auctions where buyers observe noisy information about the value of the good and the auctioneer is uncertain about what the buyers know. Third, ``robustness to distribution'' where a principal has only partial information about the distribution over agents' preference types.\footnote{For completeness, other forms of robustness are related to collusion, renegotiation, and robustness of standard mechanisms.} For instance, \cite{bergemann2011robust} and \cite{auster2018robust} consider buyer-seller models with private, respectively common values where the buyer (principal) is confronted with ambiguity about the quality distribution  of a good in question. 

The present paper focuses on robustness to preference types with a special emphasis on applications to settings in which types represent the agent's belief about the states of the world. The main goal of the paper is to identify minimal assumptions on the contract space that still guarantee the existence of optimal contracts and to understand the relation between contract and type space in such settings. This level of generality clearly prevents closed form solutions for optimal contracts; the impact of robustness on the specific structure of optimal contracts is much better analyzed on a case-by-case basis and is well beyond the scope of this paper.  }

The rest of the paper is organized as follows. Section \ref{sec:motiv_examples} provides several examples that account for robustness and a contract space that cannot be captured by existing results. Section \ref{abstract result menu} provides the results on the existence of an optimal contract.  Section \ref{sec:applications} provides a series of applications. Section \ref{concl} concludes  by {comparing the results of the present work with those in the literature}.  Appendix A recalls a series of abstract topological results; Appendix B presents the proofs.


\section{Motivating Examples}\label{sec:motiv_examples}

In this section, we present three motivating examples that illustrate the way in which nonmetrizable {\sl compact} contract spaces over {\sl utility units} arise naturally in models of reinsurance and optimal risk sharing with {\sl non-compact} metric contract spaces. We analyze each example in greater detail in Section \ref{sec:applications}. 

{ In what follows we denote by $L^0(Q), L^1(Q)$, and $L^\infty(Q)$ the class of all almost surely finite, integrable, respectively essentially bounded random variables defined on some probability space $(\Omega, {\cal F}, Q)$. On $ L^0(Q)$ we consider the topology of convergence in $Q$-probability; the set $L^p(Q)$, {$1 \leq p \leq \infty$}, is equipped with the usual $L^p$-norm $\|\cdot\|_p$. A set of random variables $X$ is called uniformly norm bounded if $\sup_{x \in X}\|x\|_\infty < \infty$}. It is called bounded in $Q$-probability if  $\lim_{R\nearrow\infty}\sup_{x\in X}Q(|x|\geq R)=0$. 

\subsection{A reinsurance model with utilities on the positive half line}
\label{sec:reinsurance_halfline}

Let us consider a reinsurance model between an uninformed principal and an informed agent. Risk exchange can occur over a set ${\cal X}$ of random variables defined on a probability space $(\Omega,{\cal F}, Q)$. The principal's  endowment is given by a random variable $e_p \in L^1(Q)$. She may hedge her risk by exchanging payoffs $x \in {\cal X}$ with the agent. Her utility from a risk transfer $x$ is given by  
\[
        E_{P'}[v(e_p-x)]
\]
for some bounded, concave utility function $v:\mathbb R_+ \to \mathbb{R}$ on the positive half line, and some belief $P'$ about the distribution of the states of the world. We assume that the belief is equivalent to $Q$ with bounded density.

 The agent is endowed with a claim $e_a \in L^1(Q)$. His utility from a risk transfer $x \in {\cal X}$ is given by 
$$E_{P}\left[u(e_a+x)\right] $$
for some strictly increasing, concave, continuous utility function $u:\mathbb R_+ \to \mathbb R$ on the positive half line and some belief $P$ that is also equivalent to $Q$ with bounded density. We refer to $P$ as the agent's {type}. 
Types are private knowledge to the agent; the principal only knows the set of possible types $\mathcal{Q}$. We identify  ${\cal Q}$ with the set of densities $\frac{\textnormal{d}P}{\textnormal{d}Q}$. 
{Incorporating } the non-negativity constraint on both parties' payoff, the set of admissible transfers (the contract space {in payoff units}) is given by 
\[
	X :=\left \{  x \in {\cal X} : -e_a \leq x \leq e_p  \right\} .
\]

\subsubsection{The contracting problem over contingent claims}

The principal faces the problem of designing an optimal risk-sharing rule $x: {\cal Q} \to X$ that assigns a transfer to any agent type $P \in {\cal Q}$ and that maximizes her utility, subject to the usual individual rationality and incentive compatibility {constraint}. We assume that the principal's utility from a risk-sharing mechanism $x: {\cal Q} \to X$ is given by a variational utility function ({see \cite{maccheroni2006ambiguity}}) of the form
\begin{equation} \label{problem1}
	 \inf_{\kappa \in \mathbb{K}}  \left\{ \int_{{\cal Q}}  E_{P'}\left[ v\left(e_p-x_P\right) \right] \textnormal{d}\kappa(P) + \alpha(\kappa) \right\}
\end{equation}
where $\mathbb{K} { \subset {\Delta (\cal Q)}}$ is a set of probability measures on ${\cal Q}$, and $\alpha: \mathbb K \to \mathbb R$ is a convex penalty function. Our choice of utility function allows the principal to be uncertain about the distribution of the agent type. The principal's optimization problem is thus given by: find a mechanism $x: {\cal Q} \to {X}$ that maximizes
   \begin{equation}\label{problem1}
  \begin{split}
 & \quad \inf_{\kappa \in \mathbb{K}}  \left\{ \int_{{\cal Q}}  E_{P'}\left[ v\left(e_p{ -}x_P\right) \right]\textnormal{d} \kappa(P) + \alpha(\kappa) \right\} \\ 
 \text{subject to} 
 & \quad {P\mapsto  E_{P'}\left[ v\left(e_p{ -}x_P\right) \right]\qquad \qquad\:\: \quad ~ \text{is measurable}, }
\\
& \quad E_{P}[u(x_P{ +e_a}) -u(e_a)]\geq 0,   \quad\qquad P \in {\cal Q} 
\\
&\quad E_{P}[u(x_P{ +e_a})-u(x_{\hat{P}}{ +e_a})]\geq 0,   \quad P,\hat P \in {\cal Q}, 
 \end{split}
 \end{equation}
 { where the last two constraints denote the individual rationality (i.e.\ participation) and truth-telling (i.e.\ incentive compatibility) constraints, respectively. The first of these guarantees that the mechanism is better than ``offering nothing'' and the second one makes sure that the agent is better-off if he reports his true type rather than lie.}
The set $X$ is $\sigma(L^1(Q),L^\infty(Q))$-compact.\footnote{{This follows from the Dunford-Pettis theorem, 
 see \cite[Theorem A.45]{FS}}.}
However, {it} is {usually not norm compact.\footnote{For the lack of norm compactness, observe as an illustration that when $-e_a<e_p$ are both constants, $\Omega=[0,1]$, and $Q$ is Lebesgue measure on $[0,1]$, then $X$ contains a sequence of $\{-e_a,e_p\}$-valued functions that oscillate increasingly, thus having no accumulation point w.r.t.\ $Q$-a.s.\ convergence and hence neither in $L^1(Q)$-norm.  By the Riesz lemma, the unit ball $\{x: \|x\| \leq 1\}$ in a normed vector space $(X,\|\cdot\|)$ is compact if and only if $X$ is finite dimensional.}} At the same time, the agent's utility function is continuous w.r.t. the norm topology under mild technical conditions but usually fails to be continuous w.r.t. the $\sigma(L^1(Q),L^\infty(Q))$-topology (unless the agent is risk neutral).\footnote{Although \eqref{problem1} is a static problem, an extension to continuous time, as in \cite{mirrlees2013strategies}, is possible.} 

\begin{remark}
We emphasize that the choice of contract space $X$  requires minimal assumptions and that the {\sl weak compactness} requires no further restrictions on the set of possible contracts. By contrast, by the Kolmogorov-Riesz compactness theorem {\sl norm compactness} of the contract space would require the set of admissible transfers to be uniformly equicontinuous on average.  
\end{remark}

\subsubsection{The contracting problem over utility units}

Without continuity assumptions on the agent's utility function and compactness conditions on the contract space, {it is difficult to establish the existence of a solution for} the contracting problem over contingent claims.  To overcome this problem, we follow an approach that goes back at least to \cite{spear1987repeated} and \cite{schattler1993first} and that has more recently been used by \cite{sannikov2008continuous} and many others, and consider instead the following set
 $$C:=\{u(e_a+x):x\in X\}$$
of the agent's utility levels as the new contract space. Since $u$ is continuous and concave, $C$ is  $\sigma(L^1(Q),L^\infty(Q))$-compact.  This leads to the following equivalent  problem  over a compact contract space: find a mechanism $c: {\cal Q} \to C$ that maximizes 
  \begin{equation}\label{problem2}
  \begin{split}
 & \quad \inf_{\kappa \in \mathbb{K}}  \left\{ \int_{{\cal Q}}  E_{P'}\left[ v\left(e_p+e_a-u^{-1}(\,c_P\,)\right) \right]\textnormal{d}\kappa(P) + \alpha(\kappa) \right\} \\ 
 \text{subject to}
 & \quad {P\mapsto   E_{P'}\left[ v\left(e_p+e_a-u^{-1}(\,c_P\,)\right) \right]\quad  \text{is measurable} },
\\
& \quad E_{P}[c_P- { u( e_a)}]\geq 0,\quad \quad\quad\quad\qquad\quad P \in {\cal Q} \quad
\\
&\quad E_{P}[c_P- c_{\hat{P}}]\geq 0, \qquad\quad\quad\quad\quad\quad\quad  P,\hat P \in {\cal Q} .
 \end{split}
 \end{equation}
Although the change of variables helps to overcome the continuity problem  (at the level of utility units, the agent's utility functional is linear), a new problem emerges: space $L^\infty(Q)$ is essentially never separable, and hence, set $C$ cannot be expected to be metrizable when equipped with the weak topology\footnote{{The standard way to metrize the weak topology on $C$ is by considering a sequence $(\phi_n)\subset L^\infty(Q)$ and defining e.g. $C\times C\ni (f,g)\mapsto d(f,g):=\sum_n\frac{|\int\phi_n(f-g)dQ|}{2^n\|\phi_n\|_\infty}$. However, unless $(\phi_n)$ is dense, this is a pseudo-metric which is not a metric ($d(f,g)=0\not\Rightarrow f=g$). If $Q$ is not purely atomic, then no dense sequence in $L^\infty(Q)$ exists. }}.
We show in Section \ref{sec:applications} how our {general existence  result from Section \ref{abstract result menu} can be used to overcome this problem, and to establish the existence of an optimal risk-sharing rule under the preceding assumptions on utility functions $u$ and $v$ if all the densities in $\cal Q$ are uniformly $L^\infty(Q)$ bounded, that is, uniformly norm bounded.

\begin{remark}
If the initial endowments must belong to $L^2(Q)$, then $X$ and hence $C$ would be $\sigma(L^2(Q),L^2(Q))$-compact. For instance, this excludes any $\alpha$-Pareto distribution of wealth with $ \alpha\leq 2$.  Unlike the $\sigma(L^1(Q),L^\infty(Q))$-topology the $\sigma(L^2(Q),L^2(Q))$-topology is metrizable on bounded sets if $L^2(Q)$ is separable. This illustrates again the {economic rational} for working with non-metrizable contract spaces: Imposing minimal assumptions on the initial endowments naturally leads to $L^1(Q)$ - and hence nonmetrizable - contract spaces. The contract space is metrizable only under additional restrictions on the agent's initial endowments or utility function.       
\end{remark}  

\subsection{A reinsurance model with utilities on the whole real line} 
\label{sec:reinsurance_wholeline}

Let us now consider a modification of the preceding reinsurance model. We assume again that risk transfer occurs over a set of random variables ${\cal X}$. The agent is endowed with a \textsl{bounded} initial claim $e_a$, his utility function $u:\mathbb R_+ \to \mathbb{R}$ is still defined on the positive half-line but the principal's utility function  $v:\mathbb R \to \mathbb{R}$ is now defined on the whole real line, such as the exponential utility function. {Thus, in { contrast to} Subsection \ref{sec:reinsurance_halfline}}, the principal's payoff is not subject to a non-negativity constraint. In this case, we  {choose} as our set of admissible transfers a set 
\begin{equation} \label{set-X}
	{X} \subset \left \{  x \in {\cal X} : -e_a \leq x \right\} 
\end{equation}
that is closed in $L^0(Q)$, convex and {bounded in $Q$-probability}. For instance, if ${\cal X}$ is convex we may consider all transfers that are bounded below by $-e_a$ and bounded above by some positive random variable $y \in L^0(Q)$. Unlike in the previous subsection, set $X$ is not necessarily weakly compact. Nonetheless, we prove in Subsection \ref{sec:appwhole} that set  $C:=\{u(e_a+x):x\in X\}$ of the agent's utility levels is again $\sigma(L^1({Q}),L^\infty({Q}))$-compact under the above assumptions on $X$ if $u$ satisfies a certain asymptotic elasticity condition. Moreover, we prove that the contracting problem \eqref{problem2} has a solution if the agent's maximal attainable utility from reinsurance is finite and the type space ${\cal Q}$ is $L^\infty(Q)$-norm compact. 

\begin{remark}
If the principal's utility function $v$ is bounded as it was assumed in the preceding subsection, then the type space only needs to be norm bounded.  In the present setup, we can no longer  assume  that $v$ is bounded because it is now defined on the whole real line. \end{remark} 

\subsection{A hedging problem {with financial markets}}
\label{sec:reinsurance_wholeline_continuous}

{We close this section with a financial market framework for the general setting of Section \ref{sec:reinsurance_wholeline}.  Specifically, we consider an incomplete continuous-time financial market for which the asset price dynamics is described by a $d$-dimensional semi-martingale $(S_t)$ defined on a filtered probability space $(\Omega, ({\cal F}_t), \mathcal{F},Q)$. We consider a contracting problem between an investor (the principal) and a broker (the agent). The broker's assessment of the financial market dynamics is private knowledge. We assume that the principal cannot (or does not want to) trade in the financial market, either because she has no access to the market or because she finds it too costly. Instead, she may hedge her risk by exchanging payoffs with the agent. The agent is endowed with a bounded claim $e_a$ and accepts all transfers that he could super-replicate at zero initial investment by trading in the financial market over the time interval $[0,T]$. Here, a contingent claim $e \in L^0(Q)$ is called super{-replicable} at zero initial investment if an admissible trading strategy $\pi$ exists such that the resulting gains from trading satisfy $e \leq (\pi \cdot S)_T := \int_{0}^T \pi_t \textnormal{d}S_t$. An admissible  trading strategy  is an $({\cal F}_t)$ adapted $d$-dimensional stochastic process $(\pi_s)$ such that $(\pi \cdot S)_T$ is uniformly bounded from below. Let ${\cal X}$ be the set of super-replicable claims. We prove in Subsection \ref{sec:appwholecont} that the resulting set $X$ in (\ref{set-X}) is indeed closed in $L^0(Q)$, convex and {bounded in $Q$-probability.



\section{An Abstract Adverse Selection Problem} \label{abstract result menu}
Let $(\Theta,\tau^\Theta)$ and $(C,\tau^C)$ be topological spaces. We endow each of them with their Borel sigma-algebras, denoted $\B(\Theta) $ and $\B(C)$ respectively. 
Space $C$ is the \emph{space of contracts} that the principal  may offer the agent.  Space $\Theta$ is the \emph{space of possible types} that the agent may have.

\subsection{Menus}

If the agent is of type $\theta \in \Theta$ and contract $c \in C$ is chosen, the agent's utility is denoted by $U(\theta,c)$
and the principal's utility is denoted by $V(\theta,c)$.   Let  $$\mathbb{K}\neq\emptyset$$ be  a set of regular Borel probability measures on $\Theta$. Set  $\mathbb{K}$ comprises the principal's beliefs about the  agent's type distribution. 

\begin{assumption} \label{4a assumption 1a}
Throughout, we make the following standing assumptions:
\begin{enumerate}
\item[(a)]
$(C,\tau^C)$ and $(\Theta,\tau^\Theta)$ are compact Hausdorff spaces.
\item[(b)]
$U:\Theta \times C \to \RR$ is jointly continuous on $\Theta \times C$.
\item[(c)]
$V:\Theta \times C \to \RR$ is jointly upper semicontinuous on $\Theta \times C$. 
\end{enumerate}
\end{assumption}

\begin{remark}\label{r2}
The standard situation we focus on is one in which $C$ and $\Theta$ are subsets of Banach spaces $E$ and $E'$ that form a dual pair $\langle E,E' \rangle$, where $\tau^C$ is the weak topology on $C$ and $\tau^\Theta$ is a norm topology on $\Theta$, and where $U(\theta,c)$ is a continuous function of the duality product $\langle c,\theta \rangle$. { Since the dual of a Banach space is large if the space itself is small and vice versa, the duality of $E$ and $E'$ implies that a solvable adverse-selection model with {\sl large} type (contract) space typically requires a {\sl small} contract (type) space. In Section 4 we consider the case $E=L^1(Q)$ and $E'=L^\infty (Q)$ and consider a weakly compact subset $C \subset E$ as our contract space. As illustrated in the previous section this choice of contract space is natural when making the agent's utility the principal's decision variables.}
\end{remark}

The family of all $\tau^C$-closed subsets of $C$ (excluding $\emptyset$) is denoted by $\CL(C)$.
\begin{definition}
A  \textit{menu} is a non-empty closed subset of contracts. In other words, menus are the elements of $\CL(C)$. 
\end{definition}


A menu $D\in\CL(C)$ is automatically compact. Whenever $C$   is not metrizable, we cannot introduce the familiar Hausdorff distance on $\CL(C)$. Instead, we equip $\CL(C)$ with the Fell topology $\tau_F$. We recall the Fell topology and the related concept of Kuratowski-Painlev\'e convergence\footnote{
Limits in this sense are where the sets accumulate.
Note that we have to work with nets instead of sequences unless the contract set is metrizable. }
 in Appendix \ref{abstract result}. {The main benefit of working with the Fell topology is that the family of closed subsets of $C$ is compact as shown by the following lemma. The result is well-known; we give a proof in Appendix \ref{proofs} for the reader's convenience.} 
 
\begin{lemma}\label{lem Fell comp}
Space $(\CL(C),\tau_F)$ is compact Hausdorff.
\end{lemma}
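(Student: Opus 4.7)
I reduce to the enlarged hyperspace $2^C := \CL(C)\cup\{\emptyset\}$, whose Fell topology is generated by the subbase consisting of $V^- := \{F \in 2^C : F \cap V \neq \emptyset\}$ for $V \in \tau^C$ and $K^+ := \{F \in 2^C : F \cap K = \emptyset\}$ for compact $K \subseteq C$. Because $C$ is itself compact, $\{\emptyset\} = C^+$ is subbasic open, so $\CL(C) = 2^C \setminus \{\emptyset\}$ is Fell-closed in $2^C$; thus compactness and Hausdorffness will descend from $2^C$ to $\CL(C)$, and it suffices to establish the lemma for $2^C$.

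\textbf{Hausdorffness.} For $F_1 \neq F_2$ in $2^C$, the cases involving $\emptyset$ are immediate via $C^+$ and some $U^-$ with $U$ open meeting the nonempty set. Otherwise both are nonempty and, without loss of generality, there is $x \in F_1 \setminus F_2$. Compact Hausdorff spaces are normal, so one finds disjoint open $U, W \subseteq C$ with $x \in U$ and $F_2 \subseteq W$. Then $\overline{U} \subseteq C \setminus W$ is compact and disjoint from $F_2$, so $F_1 \in U^-$ and $F_2 \in (\overline{U})^+$ are disjoint subbasic Fell neighborhoods, disjointness following because any set meeting $U$ meets $\overline{U}$.

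\textbf{Compactness, via nets.} Lacking metrizability I work with nets. Let $(F_\alpha)_{\alpha\in A}$ be an arbitrary net in $2^C$ and fix an ultrafilter $\mathcal U$ on $A$ refining the tails filter. Define
\[
F := \{ x \in C : \{\alpha : F_\alpha \cap U \neq \emptyset\} \in \mathcal U \text{ for every open } U \ni x\};
\]
its complement is open by inspection, so $F$ is closed. For net convergence it suffices to test subbasic neighborhoods of the limit, using that finite intersections of $\mathcal U$-large sets are $\mathcal U$-large. If $F \in V^-$, any $x \in F \cap V$ forces $\{\alpha : F_\alpha \cap V \neq \emptyset\} \in \mathcal U$ directly from the definition of $F$. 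The delicate subbase is $K^+$: if $F \cap K = \emptyset$ with $K$ compact, the ultrafilter property yields for every $y \in K$ an open $U_y \ni y$ with $\{\alpha : F_\alpha \cap U_y = \emptyset\} \in \mathcal U$; a finite subcover $U_{y_1},\dots,U_{y_n}$ of $K$ together with closure of $\mathcal U$ under finite intersections then put $\{\alpha : F_\alpha \cap K = \emptyset\}$ into $\mathcal U$. This $K^+$ verification, which crucially uses local compactness of $C$ (automatic since $C$ is compact Hausdorff), is the main obstacle; the rest is routine subbase bookkeeping. Consequently $F_\alpha \to F$ in the Fell topology along $\mathcal U$, every net in $2^C$ admits a Fell-convergent subnet, and compactness of $2^C$ follows.
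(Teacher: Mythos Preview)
Your proof is correct and takes a genuinely different route from the paper's.

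For Hausdorffness you give the standard separation via $U^-$ and $(\overline U)^+$, which handles arbitrary distinct closed sets; the paper instead separates via two sets of the form $U_i^+$. The substantive difference is in the compactness argument. The paper invokes the Alexander subbase theorem: it considers a covering of $\CL(C)$ by subbasic sets $\{V_\lambda^-\}\cup\{W_\sigma^+\}$ and, through a short case analysis (depending on whether the index sets $\Lambda,\Sigma$ are empty), extracts a finite subcover using compactness of $C$. You instead pass to $2^C$, take an ultrafilter refining the tails of an arbitrary net, explicitly construct the Kuratowski-type limit $F$, and verify subbasic convergence along the ultrafilter. Both arguments ultimately exploit the same ingredients---the subbase $\{V^-,K^+\}$ and compactness of $C$ to reduce to finitely many sets---but yours produces the cluster point concretely (which dovetails with the Kuratowski--Painlev\'e description used elsewhere in the paper), while the Alexander approach is a bit shorter and works directly in $\CL(C)$ without the detour through $2^C$.
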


{The use of the Fell topology is convenient if the contract space is non-metrizable, for instance if $C$ is a weakly compact subset of some $L^1$-space. Indeed, we prove that the agent's indirect utility function when presented a menu $D\in\CL(C)$ is continuous and that the corresponding argmax correspondence is upper semicontinuous. From this we conclude that the  principal's utility function as a function of types and menus is upper semicontinuous. Since the set of menus is compact in the Fell topology, this implies that an optimal menu exists. From this, we can then conclude the existence of an optimal contract mechanism.} 

\subsection{Agent}
Let us consider the  agent's utility over menus.  Given a menu $D\in \CL(C)$, an agent of type $\theta$ derives as utility the maximum $U^*(\theta,D)$ he can attain out of the contracts in the menu:
$$ U^*:\Theta \times \CL(C) \to \RR, \qquad U^*(\theta,D) \ := \ \sup_{d \in D} \, U(\theta,d). $$

\begin{proposition} \label{4a prop u star continuous}
 $U^*$ is continuous on $(\Theta \times \CL(C),\tau^\Theta\times\tau_F)$.
\end{proposition}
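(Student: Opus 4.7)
The plan is to prove continuity of $U^*$ by splitting it into upper and lower semicontinuity and exploiting the characterization of Fell convergence via Kuratowski--Painlev\'e limits recalled in Appendix~\ref{abstract result}. Recall that a net $D_\alpha \to D$ in $\tau_F$ amounts to $D = \Li(D_\alpha) = \Ls(D_\alpha)$, where $\Li(D_\alpha)$ collects all points reachable as limits of nets $d_\alpha \in D_\alpha$, while $\Ls(D_\alpha)$ collects cluster points of such nets. Observe that each $D \in \CL(C)$ is automatically compact (closed subset of a compact space), so $U^*(\theta,D) = \max_{d\in D} U(\theta,d)$ is attained by joint continuity of $U$.

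For \emph{upper semicontinuity}, I would take a convergent net $(\theta_\alpha, D_\alpha) \to (\theta, D)$ and, passing to a subnet along which $U^*(\theta_\alpha, D_\alpha)$ converges to $\limsup_\alpha U^*(\theta_\alpha, D_\alpha)$, select maximizers $d_\alpha \in D_\alpha$ with $U^*(\theta_\alpha, D_\alpha) = U(\theta_\alpha, d_\alpha)$. Since $C$ is compact Hausdorff, a further subnet yields $d_\alpha \to d$ for some $d \in C$; by definition of the upper Kuratowski--Painlev\'e limit $d \in \Ls(D_\alpha) = D$. Joint continuity of $U$ then gives $U(\theta_\alpha, d_\alpha) \to U(\theta, d) \leq U^*(\theta, D)$, as desired.

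For \emph{lower semicontinuity}, fix a maximizer $d^* \in D$ with $U(\theta, d^*) = U^*(\theta, D)$. Because $D = \Li(D_\alpha)$, there is a net (defined eventually) $d_\alpha \in D_\alpha$ with $d_\alpha \to d^*$. Joint continuity of $U$ together with the obvious bound $U^*(\theta_\alpha, D_\alpha) \geq U(\theta_\alpha, d_\alpha)$ yields
\[
\liminf_\alpha U^*(\theta_\alpha, D_\alpha) \;\geq\; \lim_\alpha U(\theta_\alpha, d_\alpha) \;=\; U(\theta, d^*) \;=\; U^*(\theta, D).
\]
Combining both bounds gives $U^*(\theta_\alpha, D_\alpha) \to U^*(\theta, D)$, which is the desired net-continuity of $U^*$.

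The only real subtlety I anticipate is handling \emph{nets} rather than sequences: the non-metrizability of $\tau^C$ (and hence of $\tau_F$, cf.\ Lemma~\ref{lem Fell comp}) means one cannot argue via subsequences, so the selection of $d_\alpha \in D_\alpha$ converging to $d^* \in \Li(D_\alpha)$ and the subnet extraction for the compactness argument in the upper semicontinuity step must be done in the net-theoretic framework. These are exactly the statements encoded in the definitions of $\Li$ and $\Ls$ in the appendix, so no additional machinery beyond joint continuity of $U$, compactness of $C$, and the characterization of Fell convergence is needed.
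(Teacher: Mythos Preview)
Your argument is correct, but it takes a different route from the paper's. The paper proves this in one stroke via Berge's Maximum Theorem: it introduces the ``identity'' correspondence $I:\Theta\times\CL(C)\twoheadrightarrow \Theta\times C$, $(\theta,D)\mapsto\{\theta\}\times D$, observes that $I$ is continuous (both upper and lower hemicontinuous) with compact values when the domain carries $\tau^\Theta\times\tau_F$ and the range $\tau^\Theta\times\tau^C$, and then invokes Berge to conclude that the value function $U^*(\theta,D)=\max_{(\theta',d)\in I(\theta,D)}U(\theta',d)$ is continuous. Your approach instead unpacks the two halves of continuity by hand, using the Kuratowski--Painlev\'e description of Fell convergence directly: maximizer extraction plus $\Ls$ for upper semicontinuity, and an approximating selection from $\Li$ for lower semicontinuity. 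What the paper's route buys is brevity and a clean reduction to a named theorem; what your route buys is self-containment---you never leave the net/KP framework already set up in the appendix, and in effect you are reproving exactly the piece of Berge's theorem that is needed here. One small caution: the claim that $d^*\in\Li(D_\alpha)$ yields a \emph{selection} $d_\alpha\in D_\alpha$ indexed by the \emph{same} directed set with $d_\alpha\to d^*$ is delicate in non-first-countable spaces; it is safer to argue the lower bound directly from the definition of $\Li$ (for every neighborhood $V$ of $d^*$ eventually $D_\alpha\cap V\neq\emptyset$, so by joint continuity of $U$ eventually $U^*(\theta_\alpha,D_\alpha)\geq U(\theta,d^*)-\epsilon$), which is exactly what your sketch reduces to.
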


For the agent  let $\Phi$ be the optimal contracts, depending on types and menu:
\begin{equation} \label{eq corr phi}
\Phi:\Theta \times \CL(C) \ \twoheadrightarrow \ C, \qquad \Phi(\theta,D) \ := \argmax_{d \in D} \,U(\theta,d).
\end{equation}
Set $\Phi(\theta,D)$ contains all contracts within menu $D$, which are optimal for the agent of type $\theta$.

\begin{proposition} \label{4a prop correspondence}
$\Phi$ is a jointly upper hemicontinuous correspondence with compact non-empty values.
\end{proposition}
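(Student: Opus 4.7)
The plan is to verify the three properties in sequence: non-emptiness, compactness of values, and joint upper hemicontinuity, with the upper hemicontinuity deduced from a closed-graph argument that exploits the compact Hausdorff structure of both $\Theta$ and $C$.

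For non-emptiness and compactness of values, I fix $(\theta,D)$ and note that $D$ is non-empty and $\tau^C$-compact (as a closed subset of the compact Hausdorff space $C$), while $U(\theta,\cdot)$ is continuous by Assumption \ref{4a assumption 1a}(b). Hence the supremum $U^*(\theta,D)$ is attained, so $\Phi(\theta,D) \neq \emptyset$, and $\Phi(\theta,D)$ is the preimage of the closed singleton $\{U^*(\theta,D)\}$ under the continuous restriction $U(\theta,\cdot)|_D$, which makes it a closed, hence compact, subset of $D$.

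For upper hemicontinuity, since $C$ is compact Hausdorff it suffices to show that $\Phi$ has a closed graph in $\Theta \times \CL(C) \times C$; the standard theorem (e.g.\ Aliprantis--Border, Theorem 17.11) then yields joint upper hemicontinuity. To verify this, I take a net $(\theta_\alpha,D_\alpha,d_\alpha)$ in the graph of $\Phi$ converging to $(\theta,D,d)$ in the product topology $\tau^\Theta \times \tau_F \times \tau^C$. Because $d_\alpha \in D_\alpha$, $d_\alpha \to d$, and $D_\alpha \to D$ in the Fell topology, the characterization of Fell convergence as Kuratowski--Painlev\'e convergence (recalled in Appendix \ref{abstract result}) gives $d \in \Ls D_\alpha = D$. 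On the other hand, joint continuity of $U$ yields $U(\theta_\alpha,d_\alpha) \to U(\theta,d)$, while Proposition \ref{4a prop u star continuous} yields $U^*(\theta_\alpha,D_\alpha) \to U^*(\theta,D)$. Combining both limits with the equality $U(\theta_\alpha,d_\alpha) = U^*(\theta_\alpha,D_\alpha)$ gives $U(\theta,d) = U^*(\theta,D)$, so $d \in \Phi(\theta,D)$, and the graph is closed.

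The main obstacle I anticipate is the Fell-convergence step: ensuring that a convergent net of selections $d_\alpha \in D_\alpha$ has its limit inside the Fell-limit menu $D$. This is exactly the upper Kuratowski--Painlev\'e half of the correspondence between Fell convergence and set-theoretic convergence, and once this topological fact is in place the rest of the argument reduces to the continuity of $U$ and of $U^*$ already established in Proposition \ref{4a prop u star continuous}.
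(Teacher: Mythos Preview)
Your proposal is correct and follows essentially the same route as the paper's own proof: non-emptiness and compactness via continuity of $U(\theta,\cdot)$ on the compact menu $D$, then the closed-graph characterization of upper hemicontinuity (Aliprantis--Border 17.11), with the limit point landing in $D$ by Kuratowski--Painlev\'e convergence and the equality $U(\theta,d)=U^*(\theta,D)$ obtained from Proposition \ref{4a prop u star continuous}. The only cosmetic difference is that the paper invokes $d\in\Li(D_\alpha)=D$ whereas you write $d\in\Ls(D_\alpha)=D$; since Fell convergence forces $\Li=\Ls=D$, both are equivalent.
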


To formulate the individual rationality constraint, we assume that the agent agrees with a proposed menu only if he is left better off than if he had just retained his given (type-dependent) reservation utility $\underline{u}$, that is, if
\begin{equation}\label{eq:icc}
U^*(\theta,D) \ \geq \ \underline{u}(\theta), \qquad \text{for all } \theta \in \Theta.
\end{equation}
{Equation \eqref{eq:icc} imposes a constraint on the menu $D$, by requiring for each agent type the existence of a contract in $D$ which is better than a given outside option for an agent of that type.}
We assume that function $\underline{u}:\Theta \to \RR$ is common knowledge and the following standing assumption holds:\footnote{The assumption states that at least one individually rational contract exists. For instance, {one formally expects that 
the action ``$c^*=$do nothing'' (often $c^*=0$ is a concrete model) belongs to $C$ and  $\underline u(\theta):= U(\theta, c^*)$ holds for all $\theta\in \Theta$.} }

\begin{assumption} \label{4a assumption ir}
There exists $c^* \in C$ such that $
U(\theta,c^*) \geq \underline{u}(\theta),\text{ for all $\theta \in \Theta$}.$
\end{assumption}

To describe the set of individually rational menus, let ${T:\Theta \twoheadrightarrow \CL(C)}$
 be the correspondence defined by
\begin{eqnarray*}
 T(\theta)  &:= & \left\{D \in \CL(C) \, : \, U^*(\theta,D) \geq \underline{u}(\theta) \right\} \\
&=& U^*(\theta,\cdot)^{-1}[ \underline{u}(\theta),\infty), 
\end{eqnarray*}
which is closed-valued (by Proposition \ref{4a prop u star continuous}) and hence compact-valued. By Assumption \ref{4a assumption ir}, $c^* \in C$ exists  such that  $\{c^*\}\in T(\theta)$ for all $\theta\in \Theta$. As a result, we might define, with a slight abuse of notation,
\begin{equation} \label{eq def t}
T \ := \ \bigcap_{\theta \in \Theta} T(\theta) \ \subset \CL(C) ,
\end{equation}
which is compact and non-empty. We call $T$ the set of individually rational menus.

\subsection{Principal}

Given an agent's type $\theta \in \Theta$ and a menu $D$, the principal will only consider those contracts in $D$ that are optimal w.r.t.\ her utility for this given type.
This defines a type-dependent utility for the principal, namely:
$$ V^*:\Theta \times \CL(C) \to \RR, \qquad V^*(\theta,D) \ := \ \sup_{d \in \Phi(\theta,D)} V(\theta,d). $$
The counterpart to Proposition \ref{4a prop u star continuous} is the following:

\begin{proposition} \label{4a prop v usc}
$V^*$ is upper semicontinuous  on $(\Theta \times \CL(C),\tau^\Theta\times\tau_F)$.
\end{proposition}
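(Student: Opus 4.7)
The plan is to prove upper semicontinuity of $V^*$ by a net argument, using Proposition \ref{4a prop correspondence} (joint upper hemicontinuity of $\Phi$ with compact values) together with upper semicontinuity of $V$ and the compactness of $C$ and $\CL(C)$ from Assumption \ref{4a assumption 1a} and Lemma \ref{lem Fell comp}.

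First, I would argue that the supremum in the definition of $V^*(\theta,D)$ is actually attained. Indeed, $\Phi(\theta,D)$ is non-empty and compact by Proposition \ref{4a prop correspondence}, and $V(\theta,\cdot)$ is upper semicontinuous on $C$; an upper semicontinuous function attains its maximum on any non-empty compact set, so there exists $d^\ast \in \Phi(\theta,D)$ with $V^*(\theta,D) = V(\theta,d^\ast)$. This reduction from a ``sup of a sup'' to an attained ``max'' is what makes the closed-graph argument below usable.

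Second, fix a net $(\theta_\alpha, D_\alpha)_\alpha$ converging to $(\theta,D)$ in $\tau^\Theta \times \tau_F$. To verify $\limsup_\alpha V^*(\theta_\alpha,D_\alpha) \le V^*(\theta,D)$, I pass to a subnet along which $V^*(\theta_\alpha,D_\alpha)$ converges to the $\limsup$. For each index $\alpha$ in this subnet, by the first step I can pick $d_\alpha \in \Phi(\theta_\alpha,D_\alpha)$ with $V^*(\theta_\alpha,D_\alpha) = V(\theta_\alpha,d_\alpha)$. Since $C$ is compact Hausdorff, extract a further subnet so that $d_\alpha \to d$ in $\tau^C$ for some $d \in C$.

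Third, I invoke the closed-graph property of $\Phi$. Since $\Phi$ is upper hemicontinuous with compact values and $C$ is Hausdorff, its graph is closed in $\Theta \times \CL(C) \times C$. From $(\theta_\alpha, D_\alpha, d_\alpha) \to (\theta,D,d)$ with $d_\alpha \in \Phi(\theta_\alpha,D_\alpha)$, I conclude $d \in \Phi(\theta,D)$. The upper semicontinuity of $V$ then yields
$$
\limsup_\alpha V^*(\theta_\alpha,D_\alpha) \;=\; \limsup_\alpha V(\theta_\alpha,d_\alpha) \;\le\; V(\theta,d) \;\le\; \sup_{d' \in \Phi(\theta,D)} V(\theta,d') \;=\; V^*(\theta,D),
$$
which is the desired upper semicontinuity.

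The only real obstacle is that $\tau^C$ and $\tau_F$ need not be metrizable, so one must work with nets rather than sequences and cannot invoke sequential characterizations of upper semicontinuity or of closed graphs. All the ingredients nevertheless go through at the level of nets: compactness of $C$ guarantees the existence of a convergent subnet $d_\alpha \to d$, and the closed-graph characterization of upper hemicontinuous compact-valued correspondences into Hausdorff spaces (which is what is actually used) is valid in full net generality.
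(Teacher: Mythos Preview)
Your argument is correct and uses precisely the same ingredients as the paper: the upper hemicontinuity and compact-valuedness of $\Phi$ from Proposition \ref{4a prop correspondence}, together with the joint upper semicontinuity of $V$ from Assumption \ref{4a assumption 1a}(c). The only difference is one of presentation: the paper simply invokes a known maximum-type result (citing \cite[Theorem 2, p.\ 116]{Berge1963} or \cite[Lemma 17.30]{Aliprantis2006}) to conclude immediately, whereas you have written out the standard net-based proof of that very lemma by hand---attained maximizers, subnet extraction in the compact $C$, closed-graph passage to the limit.
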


To specify the principal's  utility, we recall that the (non-empty) set $\mathbb K$ consists of regular Borel probability measures on the type space and thus it collects the principal's possible priors  over the type of the agent. We fix a penalty function 
 \[
 	\alpha: \mathbb K \to \mathbb R
  \]
 and define  the principal's  utility of a menu $D$ in variational form as follows: 
\begin{equation}\label{eq def ppal util}
\inf_{\kappa\in\mathbb K} \left\{ \int_{\theta\in \Theta}V^*(\theta,D)\textnormal{d}\kappa(\theta) + \alpha(\kappa) \right\}.
\end{equation}
We stress that these integrals are well-defined if we assume that the type space is compact, as we have assumed so far, or if merely the principal's utility function is bounded.
The  principal's optimization problem over menus reads:
\begin{equation}\label{eq prob main delegated}
\sup_{D \in T} \, \inf_{\kappa\in\mathbb K}\, \left\{ \int_{\theta\in \Theta} V^*(\theta,D) \, \textnormal{d}\kappa(\theta) + \alpha(\kappa) \right\}
\end{equation}

The following result states a  general existence  of optimal contracts.
\begin{theorem} \label{4a thm existence optimal set}
Under the standing Assumptions \ref{4a assumption 1a} and \ref{4a assumption ir}, an optimal solution $D^*\in T$ to problem \eqref{eq prob main delegated} exists.
\end{theorem}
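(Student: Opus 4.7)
My plan is to cast the problem as the maximization of an upper semicontinuous real-valued function on a compact set, which always admits a maximum (via nets, in the possibly non-metrizable setting). The set $T\subset\CL(C)$ is compact and non-empty---as recorded right after equation \eqref{eq def t} using Lemma \ref{lem Fell comp} and Assumption \ref{4a assumption ir}---so it suffices to prove that the objective $\Psi(D):=\inf_{\kappa\in\mathbb{K}}\{\int_\Theta V^*(\theta,D)\,\textnormal{d}\kappa(\theta)+\alpha(\kappa)\}$ is upper semicontinuous on $(\CL(C),\tau_F)$. Since the infimum of a family of upper semicontinuous functions is again upper semicontinuous and a constant shift $\alpha(\kappa)$ preserves semicontinuity, this reduces to showing that, for each fixed $\kappa\in\mathbb{K}$, the functional $g_\kappa:D\mapsto \int_\Theta V^*(\theta,D)\,\textnormal{d}\kappa(\theta)$ is upper semicontinuous on $\CL(C)$.

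For the main step, I would fix $D_0\in\CL(C)$ and a net $D_\alpha\to D_0$ in $\CL(C)$. By Proposition \ref{4a prop v usc}, $V^*$ is upper semicontinuous on the compact Hausdorff product $\Theta\times\CL(C)$ and therefore bounded above. Since $\kappa$ is a regular Borel---hence Radon---probability on the compact Hausdorff space $\Theta$, the standard approximation of upper integrals by continuous majorants provides, for any $\epsilon>0$, a continuous function $w:\Theta\to\mathbb{R}$ with $w\geq V^*(\cdot,D_0)$ and $\int_\Theta w\,\textnormal{d}\kappa\leq g_\kappa(D_0)+\epsilon$. The crucial step is then to upgrade pointwise joint upper semicontinuity of $V^*$ into a uniform-in-$\theta$ bound on a Fell-open neighborhood of $D_0$: for every $\theta_0\in\Theta$, joint upper semicontinuity of $V^*$ at $(\theta_0,D_0)$ together with continuity of $w$ at $\theta_0$ yields open neighborhoods $U_{\theta_0}\subset\Theta$ and $W_{\theta_0}\subset\CL(C)$ such that $V^*(\theta,D)\leq w(\theta)+\epsilon$ whenever $(\theta,D)\in U_{\theta_0}\times W_{\theta_0}$. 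Compactness of $\Theta$ gives a finite subcover $\{U_{\theta_i}\}_{i=1}^n$, and $W:=\bigcap_{i=1}^n W_{\theta_i}$ is a Fell-open neighborhood of $D_0$ on which $V^*(\theta,D)\leq w(\theta)+\epsilon$ uniformly in $\theta$. Integrating yields $g_\kappa(D)\leq g_\kappa(D_0)+2\epsilon$ for every $D\in W$; since $D_\alpha\in W$ eventually, $\limsup_\alpha g_\kappa(D_\alpha)\leq g_\kappa(D_0)+2\epsilon$, and letting $\epsilon\downarrow 0$ proves that $g_\kappa$ is upper semicontinuous.

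The principal obstacle is precisely this uniformization step. Because $(\CL(C),\tau_F)$ is in general non-metrizable, a direct sequential reverse-Fatou argument is unavailable---and in fact reverse Fatou fails for general nets---so compactness of $\Theta$ must be exploited essentially to turn joint upper semicontinuity into the required neighborhood-uniform bound. Once $\Psi$ is known to be upper semicontinuous on the compact set $T$, the existence of a maximizer $D^*\in T$ follows by the classical argument: take a maximizing net in $T$, extract a Fell-convergent subnet whose limit lies in $T$ by closedness of $T$, and conclude optimality by upper semicontinuity of $\Psi$.
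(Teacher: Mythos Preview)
Your proof is correct and follows the same overall architecture as the paper's: show that $T\subset\CL(C)$ is nonempty and compact, prove that the objective is upper semicontinuous on $(\CL(C),\tau_F)$, and conclude. The only substantive difference lies in how upper semicontinuity of $D\mapsto\int_\Theta V^*(\theta,D)\,\textnormal{d}\kappa(\theta)$ is established. The paper isolates this as a general result (Lemma~\ref{lem Fat}): it approximates the jointly u.s.c.\ function $V^*$ \emph{globally} by its jointly continuous majorants on the product $\Theta\times\CL(C)$, proves---via a compactness argument---the uniform continuity statement \eqref{eq assume conti partial} so that each such majorant yields a continuous integral, and then passes to the infimum using regularity of $\kappa$ and downward-directedness of the majorants. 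You instead approximate only the single-variable function $V^*(\cdot,D_0)$ by a continuous majorant $w$ on $\Theta$, and then use joint upper semicontinuity of $V^*$ \emph{directly}, together with compactness of $\Theta$, to uniformize the bound $V^*(\theta,D)\leq w(\theta)+\epsilon$ over a Fell neighborhood of $D_0$. Both routes rely on the same two ingredients (Radon approximation of u.s.c.\ functions by continuous majorants, and compactness of $\Theta$ for a uniformization step), but deploy them in a different order. The paper's packaging buys a reusable lemma, later invoked through Corollary~\ref{cor Fat} to handle Proposition~\ref{prop-tight}; your argument is more localized and sidesteps the need to verify \eqref{eq assume conti partial} for continuous functions on non-metrizable products.
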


{

We now present two extensions of the preceding theorem. The first deals with non-compact type spaces but assumes that the principal's utility is bounded.

\begin{proposition}\label{prop-tight}
Let  
 $V$ be uniformly bounded. Further suppose that Assumption \ref{4a assumption ir} holds, as well as Assumptions \ref{4a assumption 1a} where the compactness of the type space $(\Theta,\tau^\Theta)$ is dropped. Then, an optimal solution $D^*\in T$ to the problem \eqref{eq prob main delegated} exists.
\end{proposition}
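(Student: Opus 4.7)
My plan is to follow the skeleton of the proof of Theorem~\ref{4a thm existence optimal set}: the set $T$ of individually rational menus is cut out from $\CL(C)$ by the $\tau_F$-closed level sets $\{D:U^*(\theta,D)\geq \underline u(\theta)\}$, and the continuity of $U^*$ in the second variable (Proposition~\ref{4a prop u star continuous}) is a local statement that does not require $\Theta$ to be compact. Hence $T$ remains a closed, and so compact by Lemma~\ref{lem Fell comp}, subset of $(\CL(C),\tau_F)$. Taking a maximizing net in $T$ and passing to a $\tau_F$-convergent subnet with limit $D^*\in T$, it suffices to show that the objective
\[
F(D):=\inf_{\kappa\in\mathbb K}\left\{\int_\Theta V^*(\theta,D)\,\textnormal{d}\kappa(\theta)+\alpha(\kappa)\right\}
\]
is upper semicontinuous on $\CL(C)$. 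Since the pointwise infimum of upper semicontinuous functions is again upper semicontinuous, this reduces further to proving that, for each fixed $\kappa\in\mathbb K$, the map $I_\kappa:D\mapsto \int V^*(\theta,D)\,\textnormal{d}\kappa(\theta)$ is upper semicontinuous.

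The main obstacle is that, without compactness of $\Theta$, one cannot appeal to a dominated or monotone convergence argument along nets to exchange the $\limsup$ with the integral. I would overcome this by combining the boundedness hypothesis, which yields $M:=\sup_{(\theta,c)}|V(\theta,c)|<\infty$ and hence $|V^*|\leq M$, with the inner regularity of the Borel probability measure $\kappa$: for any $\varepsilon>0$ pick a compact set $K_\varepsilon\subset\Theta$ with $\kappa(\Theta\setminus K_\varepsilon)<\varepsilon$, so that the truncated integral $I_\kappa^{K_\varepsilon}(D):=\int_{K_\varepsilon}V^*(\theta,D)\,\textnormal{d}\kappa(\theta)$ differs from $I_\kappa(D)$ by at most $M\varepsilon$, uniformly in $D$. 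It is therefore enough to verify upper semicontinuity of $I_\kappa^{K_\varepsilon}$ at $D^*$.

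For this last step, since $V^*(\cdot,D^*)$ is bounded and upper semicontinuous on the compact Hausdorff space $K_\varepsilon$ and $\kappa$ restricted there is a regular Borel measure, the classical Radon identity $\int_{K_\varepsilon} V^*(\theta,D^*)\,\textnormal{d}\kappa=\inf\{\int_{K_\varepsilon} g\,\textnormal{d}\kappa:g\in C(K_\varepsilon),\,g\geq V^*(\cdot,D^*)\}$ allows one to pick a continuous majorant $g$ whose integral is within $\varepsilon$ of the left-hand side. Joint upper semicontinuity of $V^*$ on $\Theta\times\CL(C)$ (Proposition~\ref{4a prop v usc}, again a local statement) gives, for each $\theta\in K_\varepsilon$, neighborhoods $U_\theta$ of $\theta$ in $K_\varepsilon$ and $N_\theta$ of $D^*$ in $\CL(C)$ on which $V^*(\theta',D')<V^*(\theta,D^*)+\varepsilon\leq g(\theta)+\varepsilon$; shrinking $U_\theta$ using continuity of $g$ produces the pointwise bound $V^*(\theta',D')<g(\theta')+2\varepsilon$. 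Finitely many such $U_{\theta_i}$ cover $K_\varepsilon$, and $N:=\bigcap_i N_{\theta_i}$ is a neighborhood of $D^*$ on which $I_\kappa^{K_\varepsilon}(D')\leq I_\kappa^{K_\varepsilon}(D^*)+O(\varepsilon)$. Combined with the $M\varepsilon$-truncation bound this yields $\limsup_\gamma I_\kappa(D_\gamma)\leq I_\kappa(D^*)$, as needed. Once $F$ is known to be upper semicontinuous, optimality of $D^*$ on the compact set $T$ follows exactly as in the proof of Theorem~\ref{4a thm existence optimal set}.
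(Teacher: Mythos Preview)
Your proof is correct and follows essentially the same approach as the paper, which simply invokes Corollary~\ref{cor Fat} in place of Lemma~\ref{lem Fat}: both arguments use inner regularity of $\kappa$ to reduce to a compact piece $K_\varepsilon$ of the type space, and the uniform bound on $V$ (hence on $V^*$) to control the remainder uniformly in $D$. The only cosmetic difference is in how upper semicontinuity of $D\mapsto\int_{K_\varepsilon}V^*(\theta,D)\,\textnormal d\kappa$ is established: the paper (via Lemma~\ref{lem Fat}) writes $V^*$ as a pointwise infimum of jointly continuous majorants on $K_\varepsilon\times\CL(C)$ and interchanges $\inf$ with the integral, whereas you pick a single continuous majorant of $V^*(\cdot,D^*)$ on $K_\varepsilon$ via the Radon identity and then run a direct finite-cover argument using joint upper semicontinuity of $V^*$; both routes are standard and equivalent in strength.
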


The second extension applies to a situation in which the principal evaluates her performance according to a worst-case approach. In this case, the existence of an optimal menu can be shown under considerably weaker conditions. The proof is a straightforward modification of the proof of Theorem \ref{4a thm existence optimal set}.

\begin{proposition}\label{worst-case}
Assume that Assumption \ref{4a assumption ir} and the following conditions hold:
\begin{enumerate}
\item[(a')]
$(C,\tau^C)$ is a compact Hausdorff space.
\item[(b')]
$U:\Theta \times C \to \RR$ is continuous on $C$ for any fixed type $\theta$.
\item[(c')]
$V:\Theta \times C \to \RR$ is upper semicontinuous on $C$ for any fixed type $\theta$.
\end{enumerate}
Then, an optimal solution $D^* \in T$ to the problem of maximizing  over $T$ the functional $D \mapsto \inf_{\theta \in \Theta} V^*(\theta,D)$ exists. 
\end{proposition}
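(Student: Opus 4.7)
The plan is to adapt the proof of Theorem \ref{4a thm existence optimal set} by carrying out the key topological arguments fiberwise in the type $\theta$, so that joint continuity hypotheses are never invoked, and by replacing the integral over types with a pointwise infimum. First I would invoke Lemma \ref{lem Fell comp} to secure compactness of $(\CL(C),\tau_F)$. Then, for each fixed $\theta\in\Theta$, I would establish the fiberwise analogue of Proposition \ref{4a prop u star continuous}, namely that $D \mapsto U^*(\theta,D) = \max_{d\in D} U(\theta,d)$ is continuous on $(\CL(C),\tau_F)$, using only hypothesis (b'). Given a net $D_\alpha \to D$, the Kuratowski--Painlev\'e lower-limit relation $\Li D_\alpha \supseteq D$ allows one to approach every $d\in D$ by some $d_\alpha\in D_\alpha$, and continuity of $U(\theta,\cdot)$ then yields $\liminf U^*(\theta,D_\alpha) \geq U^*(\theta,D)$. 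In the opposite direction, a maximizer $d_\alpha\in D_\alpha$ of $U(\theta,\cdot)$ exists by continuity and compactness, any cluster point $d^*$ of $(d_\alpha)$ belongs to $\Ls D_\alpha \subseteq D$, and continuity of $U(\theta,\cdot)$ yields $\limsup U^*(\theta,D_\alpha) \leq U(\theta,d^*) \leq U^*(\theta,D)$. Consequently each $T(\theta)=\{D\in \CL(C): U^*(\theta,D)\geq \underline u(\theta)\}$ is closed, so $T=\bigcap_{\theta\in\Theta} T(\theta)$ is closed in the compact space $\CL(C)$, hence compact; and $T$ is non-empty since $\{c^*\}\in T$ by Assumption \ref{4a assumption ir}.

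Next I would establish the fiberwise counterpart of Proposition \ref{4a prop correspondence}: for fixed $\theta$, the correspondence $D\mapsto \Phi(\theta,D)$ is upper hemicontinuous on $(\CL(C),\tau_F)$ with non-empty compact values. The values are non-empty and compact because $U(\theta,\cdot)$ is continuous on the compact set $D$. Upper hemicontinuity follows by a standard argument: if $D_\alpha\to D$ and $d_\alpha \in \Phi(\theta,D_\alpha)$ clusters at $d^*$, then $d^*\in\Ls D_\alpha\subseteq D$, and for any $d\in D$ there is $d'_\alpha\in D_\alpha$ with $d'_\alpha\to d$, whence the inequality $U(\theta,d'_\alpha)\leq U(\theta,d_\alpha)$ passes to the limit by (b') to give $U(\theta,d)\leq U(\theta,d^*)$, so $d^*\in\Phi(\theta,D)$.

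Combining these ingredients, the fiberwise analogue of Proposition \ref{4a prop v usc} follows: $D\mapsto V^*(\theta,D)=\max_{d\in\Phi(\theta,D)} V(\theta,d)$ is upper semicontinuous on $(\CL(C),\tau_F)$ for each $\theta$. Indeed, by (c') and compactness of $\Phi(\theta,D_\alpha)$, the maximum is attained at some $d_\alpha$; any cluster point $d^*$ lies in $\Phi(\theta,D)$ by the preceding step, and (c') then yields $\limsup V^*(\theta,D_\alpha)=\limsup V(\theta,d_\alpha)\leq V(\theta,d^*)\leq V^*(\theta,D)$. The target functional $D\mapsto \inf_{\theta\in \Theta} V^*(\theta,D)$ is thus upper semicontinuous on $T$ as a pointwise infimum of upper semicontinuous functions, and the Weierstrass argument on the non-empty compact set $T$ delivers an optimal $D^*$. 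The main obstacle in this scheme is the sectional upper semicontinuity of $V^*$, which does not follow from the sectional upper semicontinuity of $V(\theta,\cdot)$ alone but relies crucially on the upper hemicontinuity of $\Phi(\theta,\cdot)$, and hence on the \emph{continuity}---not mere upper semicontinuity---of $U(\theta,\cdot)$ demanded by hypothesis (b').
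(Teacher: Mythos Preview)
Your proposal is correct and follows precisely the approach the paper indicates: the paper states only that ``the proof is a straightforward modification of the proof of Theorem \ref{4a thm existence optimal set},'' and you have carried out exactly this modification by re-running Propositions \ref{4a prop u star continuous}, \ref{4a prop correspondence}, and \ref{4a prop v usc} fiberwise in $\theta$ (using (b') and (c') in place of joint continuity/u.s.c.) and then replacing the integral over $\Theta$ by the pointwise infimum, which preserves upper semicontinuity. Your closing remark that the sectional upper semicontinuity of $V^*(\theta,\cdot)$ hinges on the upper hemicontinuity of $\Phi(\theta,\cdot)$, and hence on the full continuity in (b'), is also on point.
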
 
}

\subsection{On optimal contract mechanisms}\label{sec indirect measurable}

A contract mechanism is a mapping $\varphi:\Theta\to C$  from  the type space  into the contract space. 
A contract mechanism is individually rational if 
\begin{equation} \label{4a eq individual rational}
U(\theta,\varphi(\theta)) \ \geq \ \underline{u}(\theta), \qquad \text{for all } \theta \in \Theta.
\end{equation}
and incentive compatible if 
\begin{equation} \label{4a eq incentive compatible}
U(\theta,\varphi(\theta)) \ \geq \ U(\theta,\varphi(\theta')), \quad \text{for all } \theta,\theta' \in \Theta.
\end{equation}

We denote by $M$ the set of all contract mechanisms that are individually rational and incentive compatible. In  defining a principal-agent problem over optimal mechanisms, rather than over contract menus, the first difficulty  is that the integral 
$$
\int_\Theta V(\theta,\varphi(\theta))\textnormal{d}\kappa(\theta),
$$ 
is  not  defined unless   $\theta\mapsto V(\theta,\varphi(\theta))$ is measurable. Thus, we introduce set 
$$
\hat M:=\{\varphi\in M: \,\,V(\cdot,\varphi(\cdot))\text{ is measurable} \}
$$
and  define the principal-agent problem over contract mechanisms as follows:
\begin{equation}\label{eq prop main centralized}
\sup_{\varphi\in\hat M}\,\inf_{\kappa\in\mathbb K}\, \left\{  \int_\Theta V(\theta,\varphi(\theta))\textnormal{d}\kappa(\theta) + \alpha(\kappa) \right\}.
\end{equation}

\begin{theorem} \label{4a thm existence optimal mech}
Under the {assumptions of either Theorem \ref{4a thm existence optimal set} or Proposition \ref{prop-tight}, }
 an optimal solution $\varphi^*\in \hat M$ for problem \eqref{eq prop main centralized} exists.
\end{theorem}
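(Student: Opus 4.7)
The plan is to show that problems \eqref{eq prob main delegated} and \eqref{eq prop main centralized} share the same optimal value, and that the optimal menu $D^*\in T$ produced by Theorem \ref{4a thm existence optimal set} (resp.\ Proposition \ref{prop-tight}) can be disintegrated into a mechanism $\varphi^*\in\hat M$ attaining that value. The crucial feature of the argument, in line with the paper's ethos, is to avoid invoking any topological measurable selection theorem, which would fail in the non-metrizable contract space $C$.

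For the inequality ``menu dominates mechanism'', I would associate to any $\varphi\in\hat M$ the closed set $D_\varphi:=\overline{\varphi(\Theta)}^{\,\tau^C}\in\CL(C)$. Individual rationality \eqref{4a eq individual rational} immediately gives $U^*(\theta,D_\varphi)\geq U(\theta,\varphi(\theta))\geq\underline u(\theta)$, so $D_\varphi\in T$. Joint continuity of $U$ combined with incentive compatibility \eqref{4a eq incentive compatible} yields $U(\theta,\varphi(\theta))=\sup_{\theta'\in\Theta}U(\theta,\varphi(\theta'))=U^*(\theta,D_\varphi)$, so $\varphi(\theta)\in\Phi(\theta,D_\varphi)$ and hence $V(\theta,\varphi(\theta))\leq V^*(\theta,D_\varphi)$. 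Integrating against any $\kappa\in\mathbb K$, adding $\alpha(\kappa)$, and taking $\inf_\kappa$ then $\sup_\varphi$, shows that the value of \eqref{eq prop main centralized} is bounded above by that of \eqref{eq prob main delegated}.

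For the reverse direction, from the optimal menu $D^*$ I define the correspondence
\[
\Psi(\theta):=\bigl\{d\in\Phi(\theta,D^*):\,V(\theta,d)=V^*(\theta,D^*)\bigr\}.
\]
Proposition \ref{4a prop correspondence} gives that $\Phi(\theta,D^*)$ is a non-empty compact set, and the upper semicontinuity of $V(\theta,\cdot)$ on it ensures $\Psi(\theta)\neq\emptyset$. Using the Axiom of Choice I pick \emph{any} selector $\varphi^*:\Theta\to C$ with $\varphi^*(\theta)\in\Psi(\theta)$; no topological selection result is needed. Individual rationality of $\varphi^*$ follows from $U(\theta,\varphi^*(\theta))=U^*(\theta,D^*)\geq\underline u(\theta)$ (the latter because $D^*\in T$), and incentive compatibility follows from $\varphi^*(\theta')\in D^*$, giving $U(\theta,\varphi^*(\theta'))\leq U^*(\theta,D^*)=U(\theta,\varphi^*(\theta))$.

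The step one would expect to be the main obstacle, namely verifying $\varphi^*\in\hat M$ (i.e.\ measurability of $\theta\mapsto V(\theta,\varphi^*(\theta))$ in the absence of a measurable selection theorem on non-metrizable $C$), is resolved by construction: for every $\theta$ we have the identity $V(\theta,\varphi^*(\theta))=V^*(\theta,D^*)$, and the right-hand side is upper semicontinuous in $\theta$ by Proposition \ref{4a prop v usc} (only joint upper semicontinuity of $V$ and upper hemicontinuity of $\Phi$ are used, so the proof still applies in the non-compact-$\Theta$ setting of Proposition \ref{prop-tight}), hence Borel measurable. Integrating this identity against each $\kappa\in\mathbb K$ shows that $\varphi^*$ reaches the optimal menu value, and together with the reverse inequality from the previous paragraph yields that $\varphi^*$ solves \eqref{eq prop main centralized}.
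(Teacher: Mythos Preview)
Your proposal is correct and follows essentially the same approach as the paper: construct $\varphi^*$ as an Axiom-of-Choice selector from $\argmax_{d\in\Phi(\theta,D^*)}V(\theta,d)$, use the identity $V(\theta,\varphi^*(\theta))=V^*(\theta,D^*)$ to bypass measurable selection, and compare with an arbitrary $\varphi\in\hat M$ via the menu $\overline{\varphi(\Theta)}$. The only cosmetic difference is that you present the two halves in the opposite order.
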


The proof of Theorem \ref{4a thm existence optimal mech}   relies  on  Theorem \ref{4a thm existence optimal set}/Proposition \ref{prop-tight}. In fact, this proof reveals that delegated contracting implemented via contract menus is equivalent to centralized contracting implemented via contract mechanisms. {That is, the principal achieves the same optimal utility level by either proposing a suitable contract to each type of agent, or by proposing a suitable set of contracts (a menu) and letting the agent decide for himself.}



\section{Applications}\label{sec:applications}
In this section, we consider several examples for which we can establish the existence of an optimal contract by using the results of Section \ref{abstract result menu}. We are mostly interested in the case in which {the type space $\Theta$ is a subset of $L^\infty(Q)$ and the contract space $C$ in utility units is a $\sigma(L^1(Q),L^\infty(Q))$-compact subset of $L^1(Q)$.} The agent's utility function then  depends on the duality product  
\begin{equation}\label{eq duality}
	\langle \cdot, \cdot \rangle : L^1(Q) \times L^\infty(Q) \to \mathbb R.
	\end{equation}
The duality product is {\sl sequentially} continuous when $L^\infty(Q)$ is equipped with the $\sigma(L^\infty(Q),L^1(Q))$-topology (see \cite[Theorem 9.37]{Aliprantis2006}) but may fail to be continuous in general. Therefore, to guarantee continuity of the duality product we need to consider the norm topology on $L^\infty(Q)$ and assume that $\Theta$ is 
 subset of $L^\infty(Q)$.
 
First, we revisit the motivating examples presented in Section \ref{sec:motiv_examples}. Subsequently, we consider two examples in which the agent can trade in a financial market after transacting with the principal.  

\subsection{The motivating examples revisited} \label{mot ex}


\subsubsection{The reinsurance model of Section \ref{sec:reinsurance_halfline} }

Since the agent's utility function is strictly increasing, the contracting problems \eqref{problem1} and \eqref{problem2} are equivalent. We apply our abstract existence results to the contracting problem over utility levels. In the notation of Section \ref{abstract result menu}, $\tau^C$ is the $\sigma(L^1(Q),L^{\infty}(Q))$-topology on the {contract space}
 \[
	C = \{  u(e_a+x) : x \in [-e_a, e_p] \}
\]
of utility levels, $\tau^\Theta$ is the $L^{\infty}(Q)$ norm-topology on the {type space} $\Theta = {\cal Q}$, whereas
 \begin{align*}
	U(P,c) =E_P[c] \quad \mbox{and} \quad V(P',c)=  E_{P'}[v(e_p+e_a-u^{-1}(c))].
 \end{align*}
The utility function $U$ is  jointly continuous for our choice of topologies. To establish the upper semicontinuity of the utility function $V$, we recall the following result; see Proposition 2.10 in \cite{barbu2012convexity}.
\begin{lemma}\label{lem}\label{lemma-wc}
Let $L$ be a normed vector space and let $f:L \rightarrow \mathbb{R}$ be concave. Then, norm upper-semicontinuity of $f$   is equivalent to the weak upper-semicontinuity of $f$.
\end{lemma}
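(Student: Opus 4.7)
The plan is to establish the two implications separately; one direction is a purely topological observation while the other requires convex analysis.

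For the direction ``weakly upper-semicontinuous $\Rightarrow$ norm upper-semicontinuous'', I would use the standard characterization that a function is $\tau$-upper-semicontinuous if and only if every upper level set $\{x \in L : f(x) \geq c\}$ is $\tau$-closed. Since the weak topology on $L$ is coarser than the norm topology, every weakly closed subset of $L$ is automatically norm closed; applying this to the level sets of $f$ transfers weak upper-semicontinuity to norm upper-semicontinuity. Notice that concavity plays no role in this direction, and the result actually holds for any function on a normed (indeed on any topological vector) space.

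The converse is the substantive direction. Assume $f$ is concave and norm upper-semicontinuous, fix $c \in \mathbb{R}$, and consider the upper level set $A_c := \{x \in L : f(x) \geq c\}$. Concavity of $f$ makes $A_c$ convex, and norm upper-semicontinuity of $f$ makes $A_c$ norm closed. The key step is then to invoke Mazur's theorem, a consequence of the Hahn--Banach separation theorem: a convex subset of a normed vector space is norm closed if and only if it is weakly closed. This immediately yields that $A_c$ is weakly closed for every $c \in \mathbb{R}$, so $f$ is weakly upper-semicontinuous.

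The main obstacle, if one can call it that, is precisely the appeal to Mazur's theorem; however, this is entirely standard in a normed space and does not even require completeness. Once admitted, the proof reduces to the observation that concavity converts upper level sets into convex sets, at which point norm closedness and weak closedness coincide. An entirely analogous statement (with the roles reversed) holds for lower-semicontinuity of convex functions, which is in fact the version typically stated in functional analysis textbooks and from which the concave case follows by replacing $f$ with $-f$.
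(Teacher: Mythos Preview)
Your argument is correct and is precisely the standard proof of this fact: one direction is topological (the weak topology is coarser than the norm topology), and for the converse you use that upper level sets of a concave function are convex, so norm closedness and weak closedness coincide by Mazur's theorem. Note that the paper does not actually give a proof of this lemma at all; it simply cites Proposition~2.10 in \cite{barbu2012convexity}, where the argument you have written (stated for convex, lower-semicontinuous functions) is carried out.
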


Since $u^{-1}$ is convex by definition, $-u^{-1}(\cdot)$ is concave, and hence $V$ is concave because $v$ is concave. Since $V$ is also $L^1(Q)$-norm upper semicontinuous by Fatou's lemma, it follows that {$V$  is (jointly)} weakly upper semicontinuous. Moreover, if ${\cal Q} \subset L^\infty(Q)$ is norm-bounded then $V$ is uniformly bounded  because $v$ is bounded. Thus, if an individually rational contract exists, then the existence of a solution to the contracting problem follows from Theorem \ref{4a thm existence optimal mech}. {By reverting the change of variables from $X$ to $C$}, we obtain an optimal mechanism for problem \eqref{problem1}. 


 \subsubsection{The reinsurance model of Section \ref{sec:reinsurance_wholeline}}\label{sec:appwhole}

This section analyzes the model of Section \ref{sec:reinsurance_wholeline} in a broader context. Our analysis is inspired by \cite[Section 2]{Kramkov2003}. We assume  that the agent's utility function $u:\mathbb R_+ \to \mathbb R$ satisfies the following conditions. 

\begin{assumption}\label{assumption-u}
The utility function $u:\mathbb R_+ \to \mathbb R$  is lower bounded, strictly increasing, strictly concave, continuously differentiable,  satisfies the Inada condition and the following asymptotic elasticity (AE) condition:
\[
	\limsup_{z \to \infty} \frac{zu'(z)}{u(z)} < 1. 
\] 
\end{assumption}

The (AE) condition was first introduced by \cite{Kramkov1999} {to show}  that optimal solutions exist for a wide class of  utility maximization problems. For our purposes, this  yields suitable compactness of the set of utility levels. In economic terms, it is a condition on the ratio of the marginal utility $u'(z)$ and the average utility $u(z)/z$ for $z$ large. We refer to \cite[Definition 3.1]{Sch02} and thereafter, for an exhaustive discussion on this condition and its interpretation.

\begin{example}
The {CRRA class }  $u^\gamma(z)=\frac{z^\gamma}{\gamma}$ for $\gamma \in (0,1)$ satisfies Assumption \ref{assumption-u}. 
\end{example}

{To handle utilities $v$ on the whole real line}, let {$\mathcal X\subset L^0(Q)$ be the contract space in payoff units} that satisfies the following condition: 

\begin{assumption}\label{assumption-abs}
Set {$\mathcal X$} is convex  and closed in $ L^0(Q)$. Further, for any $\lambda\in\mathbb R$ the following set is bounded in $Q$-probability:
$$\{x\in{\mathcal X}:\, x\geq\lambda\}.$$
\end{assumption}

We assume that the agent is endowed with a bounded random variable $e_a \in L^\infty(Q)$ and introduce the  set of  feasible risk transfers 
\begin{eqnarray}\label{Xea}
	{\mathcal X}(e_a) & :=& \{x\in L^0(Q):0 \leq x \leq e_a+\bar x \text{ for some }\bar x\in\mathcal {X}\} \\
	& =& (e_a+{\mathcal X}-L^0_+(Q))\cap L^0_+(Q).\nonumber
\end{eqnarray}
{This set represents the set of risk transfers that are feasible to the agent.} It follows from Assumption \ref{assumption-abs} and the Komlos lemma that ${\mathcal X}(e_a)$ is convex, solid,\footnote{This means that if $x \in {\mathcal X}(e_a)$ and $y \leq x$, then $y \in {\mathcal X}(e_a)$.} and closed in $ L^0(Q)$. Next, we introduce the polar set of ${\mathcal X}(e_a)$, namely,
$${\mathcal X}(e_a)^0:=\{x^0\in L^0_+(Q):\,E_Q[x x^0]\leq 1 \text{ for all }x\in \mathcal {X}(e_a)\}$$
along with the convex conjugate $u^*$ of $u$:
$${u^*}(y):=\sup_{z\in\mathbb{R}}\{u(z)-zy\}.$$

\begin{lemma} \label{lemma-sigma-compact}
Under Assumptions \ref{assumption-u} and \ref{assumption-abs}, assume that  some $y>0$ and a random variable $x^0\in {\mathcal X}(e_a)^0$ exist such that
\begin{align}\label{eq ass v Y}
	E_Q[{u^*}(x^0y)]<\infty. 
\end{align}
Then, the {contract space in utility units}
$
	C:=u(\mathcal {X}(e_a))=\{u(x) : x \in {\mathcal X} (e_a)\}
$
is contained in $L^1(Q)$ and is $\sigma(L^1(Q),L^\infty(Q))$-compact. 
\end{lemma}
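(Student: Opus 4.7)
The plan is to establish $\sigma(L^1(Q),L^\infty(Q))$-compactness of $C$ by combining the Dunford--Pettis theorem with Mazur's theorem. Concretely, I would verify that $C$ is (i) contained in and bounded in $L^1(Q)$, (ii) uniformly integrable, (iii) convex, and (iv) closed in $L^1(Q)$-norm. Then (i) and (ii) give relative $\sigma(L^1,L^\infty)$-compactness, while (iii) and (iv) give weak closedness, yielding the claim.

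For (i), the Fenchel--Young inequality $u(z)\leq u^*(\eta)+z\eta$ for all $z\geq 0,\eta>0$, applied pointwise with $\eta=yx^0(\omega)$ and integrated, yields
\[
E_Q[u(x)] \;\leq\; E_Q[u^*(yx^0)] + y\,E_Q[xx^0] \;\leq\; E_Q[u^*(yx^0)]+y,
\]
uniformly in $x\in\mathcal X(e_a)$, by the polar bound $E_Q[xx^0]\leq 1$ and hypothesis \eqref{eq ass v Y}. The lower bound on $u$ in Assumption \ref{assumption-u} controls $u^-(x)$ uniformly, so $\|u(x)\|_{L^1(Q)}$ is uniformly bounded.

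The main obstacle is (ii), for which the (AE) condition is decisive. Here I would follow the duality arguments of Kramkov--Schachermayer: by \cite[Lemma 6.3 and Corollary 6.1]{Kramkov1999}, (AE) is equivalent to the existence of $\gamma\in(0,1)$ and $K>0$ such that $u(\lambda z)\leq \lambda^\gamma(u(z)+K)$ for all $\lambda\geq 1, z\geq 0$, which provides the sub-linear control $u^+(z)\leq K'(1+z^\gamma)$. Combined with the $Q$-probability boundedness of $\mathcal X(e_a)$ (from Assumption \ref{assumption-abs} and $e_a\in L^\infty(Q)$) and the integrability hypothesis \eqref{eq ass v Y}, one can then deduce that $\sup_{x\in\mathcal X(e_a)}E_Q[u^+(x)\mathbf{1}_A]\to 0$ as $Q(A)\to 0$. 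Concretely, splitting via Fenchel at scale $\lambda\geq 1$ gives
\[
E_Q[u^+(x)\mathbf{1}_A] \;\leq\; E_Q[|u^*(yx^0)|\mathbf{1}_A] + |u(0)|Q(A) + \lambda y\,E_Q[xx^0\mathbf{1}_A];
\]
absolute continuity of the integral handles the first term (since $u^*(yx^0)\in L^1(Q)$ by hypothesis), and the third is controlled by tuning $\lambda$ against the $Q$-probability tail bound on $\mathcal X(e_a)$, using the polynomial growth bound on $u^+$.

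For (iii), let $c_i=u(x_i)\in C$ and $\lambda\in[0,1]$: concavity of $u$ gives $\lambda c_1+(1-\lambda)c_2\leq u(\lambda x_1+(1-\lambda)x_2)$; since $\mathcal X(e_a)$ is solid and $u$ is strictly increasing and continuous, the equation $u(\tilde x)=\lambda c_1+(1-\lambda)c_2$ admits a solution $\tilde x\in\mathcal X(e_a)$, so $C$ is convex. For (iv), if $u(x_n)\to c$ in $L^1(Q)$, extract an a.s.-convergent subsequence; continuity of $u^{-1}$ yields $x_n\to u^{-1}(c)$ a.s., hence in $L^0(Q)$; by the $L^0(Q)$-closedness of $\mathcal X(e_a)$ (a consequence of Assumption \ref{assumption-abs} and Komlos' lemma already noted before the statement), $u^{-1}(c)\in\mathcal X(e_a)$, and so $c\in C$. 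Mazur's theorem promotes $L^1$-closedness of the convex set $C$ to weak closedness, and together with Dunford--Pettis applied to (i)--(ii) this delivers the $\sigma(L^1(Q),L^\infty(Q))$-compactness of $C$.
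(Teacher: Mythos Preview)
Your overall strategy—Dunford--Pettis for relative weak compactness plus Mazur for weak closedness via convexity and norm-closedness—matches the paper's. Steps (i) and (iii) are correct and essentially identical to the paper's arguments. Your step (iv) is actually a simplification: you appeal directly to the $L^0(Q)$-closedness of $\mathcal X(e_a)$, whereas the paper invokes the Brannath--Schachermayer bipolar theorem and Fatou's lemma. (You should add one sentence explaining why $u^{-1}(c)$ is a.s.\ finite; this follows from the $Q$-probability boundedness of $\{x_n\}\subset\mathcal X(e_a)$, which rules out $x_n\to\infty$ on a set of positive measure.)

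The genuine gap is in step (ii). Your displayed inequality is not correct as written: Fenchel with argument $yx^0$ produces no $\lambda$, and Fenchel with argument $yx^0/\lambda$ gives a third term with coefficient $y/\lambda$, not $\lambda y$. With the factor $\lambda y$ and the only uniform bound $E_Q[xx^0\mathbf 1_A]\leq 1$ from the polar, sending $\lambda\to\infty$ destroys the estimate rather than helping it. The accompanying explanation—``tuning $\lambda$ against the $Q$-probability tail bound on $\mathcal X(e_a)$, using the polynomial growth bound on $u^+$''—does not match the inequality either, and probability boundedness of $\mathcal X(e_a)$ alone gives no uniform integrability of $u^+(x)$. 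What does work is to use the \emph{dual} form of (AE): by \cite[Lemma~6.3]{Kramkov1999} one has $u^*(\eta/2)\leq a\,u^*(\eta)+b$, hence $u^*(yx^0/\lambda)\in L^1(Q)$ for every $\lambda=2^m$. Applying Fenchel with $\eta=yx^0/\lambda$ then yields
\[
E_Q[u(x)\mathbf 1_A]\;\leq\; E_Q\bigl[u^*(yx^0/\lambda)\mathbf 1_A\bigr]+\frac{y}{\lambda}\,E_Q[xx^0\mathbf 1_A]\;\leq\; E_Q\bigl[u^*(yx^0/\lambda)\mathbf 1_A\bigr]+\frac{y}{\lambda},
\]
and one first chooses $\lambda$ large, then $Q(A)$ small. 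The paper obtains the same conclusion by contradiction: assuming uniform integrability fails, it extracts disjoint events $A_n$ and $g_n\in\mathcal X(e_a)$ with $E_Q[u(g_n)\mathbf 1_{A_n}]\geq\alpha$, sums, and uses precisely this dual scaling of $u^*$ together with the polar bound to reach a contradiction.
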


The following lemma gives a sufficient condition that guarantees condition \eqref{eq ass v Y}. It essentially states that the agent's maximal attainable utility from risk transfer is finite. The proofs of Lemmas \ref{lemma-sigma-compact} and \ref{lemma-suff-cond} are presented in Appendix B. 

\begin{lemma}\label{lemma-suff-cond}
Let $\|e_a\|$ be considered as a constant function on $\Omega$. If
\begin{align}\label{eq ass finite u}
\sup_{x\in \mathcal {X}(\|e_a\|)} E_Q[u(x)]  <\infty,
\end{align}
where ${\mathcal  X}(\|e_a\|)$ defined in \eqref{Xea}, then condition \eqref{eq ass v Y} holds.
\end{lemma}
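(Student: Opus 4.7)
The plan is to reduce to the abstract Kramkov--Schachermayer duality for utility maximization and extract the required dual element from its solution. Set $z := \|e_a\|$, viewed as a constant function on $\Omega$. Since $e_a \leq z$ holds $Q$-a.s., the definition \eqref{Xea} directly yields $\mathcal X(e_a) \subseteq \mathcal X(z)$, and taking polars reverses the inclusion, giving $\mathcal X(z)^0 \subseteq \mathcal X(e_a)^0$. Hence it suffices to produce some $y > 0$ and some $x^0 \in \mathcal X(z)^0$ with $E_Q[u^*(y x^0)] < \infty$.

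Next I would verify that $\mathcal X(z)$ fits into the abstract framework for which the duality is available. Convexity, solidity, and $L^0$-closedness of $\mathcal X(z)$ were already recorded in the paragraph immediately following \eqref{Xea}. Boundedness in $Q$-probability is automatic: any $x \in \mathcal X(z)$ is sandwiched $0 \leq x \leq z + \bar x$ for some $\bar x \in \mathcal X$ with $\bar x \geq -z$, and Assumption \ref{assumption-abs} forces the set of such $\bar x$ to be bounded in probability. Together with Assumption \ref{assumption-u} on $u$ (strict concavity, Inada, and (AE)) and the hypothesis \eqref{eq ass finite u} that the primal value on $\mathcal X(z)$ is finite, all the ingredients of Theorem 3.2 of \cite{Kramkov1999}, in the abstract admissible-set formulation of \cite{Kramkov2003}, are present. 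That theorem delivers the finiteness (in fact attainment) of the dual value
$$v_A(y) := \inf\{E_Q[u^*(y x^0)] : x^0 \in \mathcal X(z)^0\}$$
for every $y > 0$. Picking any such $y$ and any minimizer $x^0_y \in \mathcal X(z)^0 \subseteq \mathcal X(e_a)^0$ produces the witness of \eqref{eq ass v Y}.

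The main obstacle I anticipate is strictly technical and concerns matching $\mathcal X(z)$ to the exact set of hypotheses in the cited duality theorem, whose original statement is tailored to budget sets arising from stochastic integrals in a financial market. The remedy is to use the abstract distillation of \cite{Kramkov2003}, which requires only the structural properties verified above. If a direct invocation proves delicate, a self-contained alternative proceeds via the Brannath--Schachermayer bipolar theorem (applicable to solid convex $L^0$-closed subsets of $L^0_+$), combined with the observation that the (AE) condition forces $u^*$ to have sub-polynomial growth near zero; this then translates the finiteness of the primal value directly, through Legendre--Fenchel conjugacy, into the existence of some $y > 0$ and $x^0 \in \mathcal X(z)^0$ with $E_Q[u^*(y x^0)] < \infty$.
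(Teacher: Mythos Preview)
Your proposal is correct and follows essentially the same approach as the paper: both arguments use the inclusion $\mathcal X(e_a)\subseteq \mathcal X(\|e_a\|)$ to reverse polars, then invoke the Kramkov--Schachermayer duality (the paper cites \cite[Theorem 3.1]{Kramkov1999}, you cite Theorem 3.2 and the abstract formulation of \cite{Kramkov2003}) to extract a dual element $Y\in\mathcal X(\|e_a\|)^0$ with $E_Q[u^*(yY)]<\infty$. Your additional verification of the structural hypotheses on $\mathcal X(z)$ and the alternative bipolar-theorem route are extra detail the paper omits, but the core argument is identical.
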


{With Lemma \ref{lemma-sigma-compact} and  \ref{lemma-suff-cond} at hand,} we can now use the same arguments as in the preceding subsection along with Theorem \ref{4a thm existence optimal mech} to establish the following result.  It shows that the contracting problem introduced in Section \ref{sec:reinsurance_wholeline} has a solution if the agent's utility from risk sharing is finite. 	

\begin{theorem}\label{Thm abstract KS}
 Under Assumptions \ref{assumption-u} and  \ref{assumption-abs}, if \eqref{eq ass finite u} holds
	and if ${\cal Q} \subset L^\infty(Q)$ is norm compact,\footnote{
Special cases of norm-compact subsets of $L^\infty(Q)$ can be constructed using Arzela and Ascoli's theorem when $\Omega$ is a topological space.} then the contracting problem of finding a mechanism $x:\mathcal{Q}\to \mathcal{X}(e_a)$ that  maximizes 
	{
  \begin{equation}\label{problem1concrete}
  \begin{split}
 & \quad \inf_{\kappa \in \mathbb{K}}  \left\{ \int_{{\cal Q}}  E_{P'}\left[ v\left(e_p-e_a+x_P)\right) \right]\textnormal{d} \kappa(P) + \alpha(\kappa) \right\} \\ 
 \text{subject to} 
 & \quad {P\mapsto  E_{P'}\left[ v\left(e_p-e_a+x_P)\right) \right],\quad ~ \text{is measurable} }
\\
& \quad E_{P}[u(x_P)-u(e_a)]\geq 0, \quad \quad\:\:   ~ \:\:\: P \in {\cal Q}
\\
&\quad E_{P}[u(x_P)- u(x_{\hat{P}})]\geq 0, \quad \quad \quad P,\hat P \in {\cal Q}, 
 \end{split}
 \end{equation}	
has a solution as soon as an individually rational contract exists (this is the case if, for instance, $0\in\X$). }
\end{theorem}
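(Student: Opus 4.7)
The plan is to recast the problem in ``utility units'' via $c = u(x)$ and then apply Theorem \ref{4a thm existence optimal mech}, following the template used for the half-line case in Section \ref{mot ex}.

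First, hypothesis \eqref{eq ass finite u} feeds into Lemma \ref{lemma-suff-cond} to yield \eqref{eq ass v Y}, after which Lemma \ref{lemma-sigma-compact} guarantees that the contract space in utility units
\begin{equation*}
C \; := \; u(\mathcal{X}(e_a))
\end{equation*}
is contained in $L^1(Q)$ and is $\sigma(L^1(Q),L^\infty(Q))$-compact. Since $u$ is strictly increasing by Assumption \ref{assumption-u}, $x\mapsto u(x)$ is a bijection between $\mathcal{X}(e_a)$ and $C$, so any mechanism $x:\mathcal{Q}\to \mathcal{X}(e_a)$ corresponds uniquely to a mechanism $c:\mathcal{Q}\to C$ with $c_P = u(x_P)$. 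Under this change of variables, the participation and truth-telling constraints of \eqref{problem1concrete} become $E_P[c_P]\geq E_P[u(e_a)] =: \underline{u}(P)$ and $E_P[c_P - c_{\hat P}]\geq 0$, respectively, the agent's utility reduces to the linear functional $U(P,c) := E_P[c]$, and the principal's criterion takes the form $V(P,c) := E_{P'}[v(e_p+e_a-u^{-1}(c))]$, which is independent of the type $P$.

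Next I would equip $C$ with $\tau^C := \sigma(L^1(Q),L^\infty(Q))$ and $\Theta := \mathcal{Q}$ with $\tau^\Theta$ the $L^\infty(Q)$-norm topology, and verify Assumption \ref{4a assumption 1a}. Part (a) is immediate from the previous step and the hypothesis on $\mathcal{Q}$. Parts (b)--(c) follow in the spirit of Section \ref{mot ex}: the map $c\mapsto V(P,c)$ is concave (since $v$ is concave and $-u^{-1}$ is concave) and $L^1(Q)$-norm upper semicontinuous by Fatou's lemma, hence weakly upper semicontinuous by Lemma \ref{lemma-wc}, and jointly upper semicontinuous as it does not depend on $P$. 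To see joint continuity of $U$, given a net $(P_\alpha,c_\alpha)\to(P,c)$ in $\tau^\Theta\times\tau^C$ I would decompose
\begin{equation*}
E_{P_\alpha}[c_\alpha] - E_P[c] \;=\; E_Q\!\left[(c_\alpha-c)\,\tfrac{dP}{dQ}\right] \;+\; E_Q\!\left[c_\alpha\!\left(\tfrac{dP_\alpha}{dQ}-\tfrac{dP}{dQ}\right)\right];
\end{equation*}
the first summand vanishes by weak convergence of $c_\alpha$ to $c$ (since $dP/dQ\in L^\infty$), and the second is bounded by $\|c_\alpha\|_1\,\|dP_\alpha/dQ - dP/dQ\|_\infty$, which vanishes because $\|c_\alpha\|_1$ is uniformly bounded (weakly compact subsets of $L^1$ are norm bounded) and the densities converge in $L^\infty$-norm.

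Finally, the existence of an individually rational contract supplies Assumption \ref{4a assumption ir}: if $0\in\mathcal{X}$ then $e_a\in\mathcal{X}(e_a)$ and $c^* := u(e_a)$ satisfies $U(P,c^*) = \underline{u}(P)$ for every $P$. Theorem \ref{4a thm existence optimal mech} therefore yields an optimal mechanism $c^*\in\hat M$, and reversing the change of variables $x^*_P := u^{-1}(c^*_P)$ produces the desired optimal mechanism for \eqref{problem1concrete}. The main subtlety I anticipate is the non-metrizability of $\tau^C$: the joint continuity argument for $U$ must be phrased at the level of nets and works only because the type space is endowed with the \emph{norm} topology, precisely so that the $L^1$--$L^\infty$ duality pairing becomes genuinely (and not merely sequentially) continuous.
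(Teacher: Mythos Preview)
Your proposal is correct and follows essentially the same approach as the paper: invoke Lemmas \ref{lemma-suff-cond} and \ref{lemma-sigma-compact} to obtain $\sigma(L^1(Q),L^\infty(Q))$-compactness of $C=u(\mathcal{X}(e_a))$, then repeat the verification from the half-line case (joint continuity of $U$ via the $L^1$--$L^\infty$ duality with the norm topology on $\Theta$, weak upper semicontinuity of $V$ via concavity and Lemma \ref{lemma-wc}) and apply Theorem \ref{4a thm existence optimal mech}. Your net-level argument for the joint continuity of $U$ is in fact more explicit than what the paper provides, and your observation that norm compactness of $\mathcal{Q}$ (rather than mere boundedness) is needed here precisely because $v$ is unbounded on the whole real line matches the paper's own remark.
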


{Owing to Section \ref{abstract result menu}, the principal achieves in fact the same optimal utility level by either proposing a suitable contract to each type of agent (as in \eqref{problem1concrete}), or by proposing a suitable set of contracts (a menu) and letting agents decide for themselves. The same applies to the next application.}


\subsubsection{The hedging problem of Section \ref{sec:reinsurance_wholeline_continuous} }\label{sec:appwholecont}

The analysis of the hedging problem is related to the analysis in \cite{cvitanic2001utility}. A trading strategy $\pi$ is called \textit{admissible}, if the {gain from trade, modelled as a  stochastic integral $(\pi.S)_T=\int_0^T \pi_t \textnormal{d}S_t$,} is well--defined and lower bounded. We put
$${\mathcal X}_0:=\{x :x\leq (\pi.S)_T,\, \pi \text{ is admissible}\}.$$

{For the considered financial market, we require a form of no-arbitrage:}
\begin{assumption}\label{assumption-S}
A probability measure $Q'\approx Q$ exists such that the process $t\mapsto (\pi.S)_t$ is a $Q'$-local martingale for every admissible strategy $\pi$. Moreover, 
$$u_{max}:=\sup_{x\in {\mathcal X}_0} E_Q[u(x+e_a)]< \infty.$$

\end{assumption}

The key observation, made in \cite[Lemma 1]{Kramkov2003} for the case without random endowments, but easily obtainable in our setting as well, is the following:

\begin{lemma}
 Assumptions \ref{assumption-u} and \ref{assumption-S} imply   $\sigma(L^1(Q),L^\infty(Q))$-compactness   of 
$
	C=\{u(x) : x \in ({ \cal X}_0+e_a)\cap L_+^0(Q)\}.
$
 
\end{lemma}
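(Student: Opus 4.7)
The plan is to verify the hypotheses of Lemma \ref{lemma-sigma-compact} with $\mathcal{X} := \mathcal{X}_0$ and then apply that lemma. First I note that $\mathcal{X}_0$ is \emph{solid} in the sense that $y \leq x$ and $x \in \mathcal{X}_0$ imply $y \in \mathcal{X}_0$ (since $y \leq x \leq (\pi.S)_T$), which yields the identity
$$
(e_a + \mathcal{X}_0 - L^0_+(Q)) \cap L^0_+(Q) \;=\; (\mathcal{X}_0 + e_a) \cap L^0_+(Q).
$$
This identifies $C$ in the statement with the set $u(\mathcal{X}(e_a))$ appearing in Lemma \ref{lemma-sigma-compact}, so it remains to verify Assumption \ref{assumption-abs} for $\mathcal{X}_0$ and to establish condition \eqref{eq ass v Y}.

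Convexity of $\mathcal{X}_0$ is immediate from convexity of the set of admissible strategies; closedness of $\mathcal{X}_0$ in $L^0(Q)$ is a classical fact of mathematical finance under the existence of an equivalent local martingale measure (Assumption \ref{assumption-S}), via Delbaen--Schachermayer-type closedness arguments for super-replicable claims. For boundedness in $Q$-probability of $\{x \in \mathcal{X}_0 : x \geq \lambda\}$, I would argue as follows: if $x \leq (\pi.S)_T$ with $x \geq \lambda$, then $(\pi.S)_t - \lambda \geq 0$ is a $Q'$-local martingale bounded below, hence a $Q'$-supermartingale with $E_{Q'}[(\pi.S)_T - \lambda] \leq -\lambda$. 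Markov's inequality then yields a uniform tail bound in $Q'$-probability, which transfers to $Q$-probability by the equivalence $Q \approx Q'$ (e.g., via a subsequence plus dominated convergence using $dQ/dQ'$).

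For condition \eqref{eq ass v Y}, I would take $x^0 := Z/E_{Q'}[e_a]$ with $Z := dQ'/dQ$. The $Q'$-supermartingale property of admissible wealth processes together with $e_a \in L^\infty(Q)$ gives $E_Q[y \, x^0] = E_{Q'}[y]/E_{Q'}[e_a] \leq 1$ for every $y \in \mathcal{X}(e_a)$, so indeed $x^0 \in \mathcal{X}(e_a)^0$. To find $y > 0$ with $E_Q[u^*(y x^0)] < \infty$, I would invoke Kramkov--Schachermayer duality: under the asymptotic elasticity condition built into Assumption \ref{assumption-u}, the finiteness of the primal value $u_{max} < \infty$ (Assumption \ref{assumption-S}) forces finiteness of the associated dual value over a suitable class of supermartingale densities dominated by $x^0$, and a scalar $y > 0$ with the claimed property can be extracted from this fact.

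The main obstacle is this last dual-finiteness step, since it imports the Kramkov--Schachermayer duality machinery rather than arguing from scratch; the asymptotic elasticity hypothesis is precisely what is needed there to rule out the pathological behaviour of $u^*$ at infinity that would otherwise make all dual values infinite in spite of the finite primal value.
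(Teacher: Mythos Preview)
The paper itself gives no proof here: it simply refers to \cite[Lemma 1]{Kramkov2003} (the case without random endowment) and asserts the extension is routine. Your plan---verify Assumption \ref{assumption-abs} for $\mathcal{X}_0$, then invoke Lemma \ref{lemma-sigma-compact}---is exactly the natural way to make that citation precise, since Lemma \ref{lemma-sigma-compact} is the paper's own abstraction of that Kramkov--Schachermayer lemma. So your approach and the paper's intended one coincide.

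Two small points worth tightening. First, in the boundedness-in-probability step you write ``$(\pi.S)_t-\lambda\geq 0$'', but $x\geq\lambda$ only controls the terminal value; what you actually use is that admissibility gives a lower bound on the whole process, hence the $Q'$-supermartingale property and $E_{Q'}[(\pi.S)_T]\leq 0$, from which $E_{Q'}[x-\lambda]\leq -\lambda$ and Markov's inequality do the rest. Second, for condition \eqref{eq ass v Y} you fix $x^0=Z/E_{Q'}[e_a]$ in advance and then seek $y$ with $E_Q[u^*(yx^0)]<\infty$; there is no a priori reason this particular $x^0$ works. The Kramkov--Schachermayer duality (under the finiteness $u_{max}<\infty$ and the asymptotic elasticity condition) yields \emph{some} element $Y^*$ of the dual domain with $E_Q[u^*(yY^*)]<\infty$, and since any such $Y^*$ satisfies $E_Q[Y^*x]\leq \|e_a\|_\infty$ for all $x\in\mathcal{X}(e_a)$, a rescaling of $Y^*$ lies in $\mathcal{X}(e_a)^0$. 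So the argument goes through, just with a different $x^0$ than the one you nominated.
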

 
Again, in the notation of Section \ref{abstract result menu}, $\tau^C$ is the $\sigma(L^1(Q),L^{\infty}(Q))$-topology on $C$ of utility levels, $\tau^\Theta$ is the $L^{\infty}(Q)$ norm-topology on set $\Theta = {\cal Q}$, and 
 \begin{align*}
	U(P,c) =E_P[c] \quad \mbox{and} \quad V(P',c)= E_{P'}[v(e+e_a-u^{-1}(c))].
 \end{align*}
The agent's utility   $U$ is  jointly continuous and the principal's utility  $V$ is upper semicontinuous. As a result, the hedging problem introduced in Section \ref{sec:reinsurance_wholeline_continuous} has a solution if the set of densities in ${\cal Q}$ is $L^\infty(Q)$-norm-compact. {This result is formally covered by Theorem \ref{Thm abstract KS}, but not quite, since 	hypothesis $u_{max}<\infty$ is weaker than \eqref{eq ass finite u}.}


\subsection{ Optimized risk sharing under incomplete markets}\label{exp uti}

In the previous examples, the agent had to decide whether to accept (or reject) a contract  based on his portfolio after the risk transfer. In this section we consider two examples in which he decides to accept (or reject) a contract based on the ``indirect utility'' that arises after investing in a financial market about which he has private information. Specifically, we consider a continuous time financial market model with one risk-free and one risky asset. The price of the risk-free asset is normalized to one. The discounted price of the risky asset follows a geometric Brownian motion with drift:
\[
	\frac{\textnormal{d}S_t}{S_t} = \textnormal{d}W_t + f'(W_t) \textnormal{d}t, \quad S_0 = s, \quad 0 \leq t \leq T
\]
where $W$ is a one-dimensional standard Brownian motion defined on the Wiener space $(\Omega, {\cal F}, ({\cal F}_t)_{0 \leq t \leq T}, Q)$ and $f \in \mathbb F$ where $\mathbb F$ is a class of real-valued twice continuously differentiable, uniformly bounded, uniformly equicontinuous functions $f: \mathbb{R} \to \mathbb{R}$ with common compact support that satisfy the normalization constraint $f(0)=0$\footnote{We think of $f'$ as being a constant outside a large compact set. In this case, the price dynamics essentially reduces to geometric Brownian motion.}. The closure $\bar{\mathbb F}$ of $\mathbb {F}$ w.r.t.\ the topology of uniform convergence is a compact subset of the set of continuous functions on $\mathbb{R}$ w.r.t.\ the supremum norm by the theorem of Arzela and Ascoli. By Girsanov's theorem, for any $f \in \bar{\mathbb F}$ there exists a unique equivalent martingale measure $P_f$ whose density is given by 
\[
	\frac{\textnormal{d}P_f}{\textnormal{d}Q} = e^{f(W_T)}.
\]
We assume that the pricing kernel is private knowledge to the agent and choose as our type set the function space 
\[
	\Theta := \bar{\mathbb F}
\]
equipped with the topology of uniform convergence.
In particular, the financial market is complete from the agent's point of view. The agent is endowed with a bounded claim $e_a \in {L^\infty}(Q)$. His preferences over payoffs are defined by an expected utility functional of the form $E_Q[u(\cdot)]$ for some Bernoulli utility function $u: \mathbb{R} \to \mathbb{R}$ that we choose to be the exponential (Subsection 4.2.1), respectively the logarithmic (Subsection 4.2.2) one.  
The principal is endowed with a bounded claim $e_p \in L^\infty(Q)$; her preferences over payoffs are defined by an expected utility functional of the form $E_Q[v(\cdot)]$ for some concave Bernoulli utility function $v:\mathbb{R} \to\mathbb{R}$. An admissible trading strategy for the agent is an adapted real-valued stochastic process vector $\xi$ that satisfies $E_Q[\int_0^T \xi_t^2 \textnormal{d}t] < \infty$.

\subsubsection{Risk-sharing with hedging}

Let us first consider a risk-sharing problem in which the agent can hedge his risk by trading in the financial market after transacting with the principal. We assume that 
\[
	u(y)=1-e^{-\alpha y}
\] 
with CARA parameter $\alpha > 0$.

{ The agent as an expert has better information about the financial market and  has a single pricing measure $P_f$.} Given his initial endowment $e_a \in L^\infty(Q)$, the budget set of  agent  $f \in \bar{\mathbb{F}}$ is given by
\begin{equation} \label{budget}
	{\cal B}(f) := \left\{ x \in L^1(Q) : E_f[x-e_a] \leq 0 \right\}
\end{equation}
where $E_f$ denotes the expectation under the pricing measure $P_f$. We notice that $E_f[e_a]$ is finite because $P_f$ has a bounded density w.r.t.~$Q$. The following result can be inferred from \cite[Chapter 3, Example 3.37]{FS}. It yields the optimal claim and the optimal utility from trading for any agent type. 

\begin{lemma}\label{x*}
The optimal attainable payoff over all admissible trading strategies for an agent of type $f$ in the budget set \eqref{budget} is  
\[
	x^* = -\frac{1}{\alpha} \ln \left(\frac{\textnormal{d}P_f}{\textnormal{d}Q} \right)+ E_f[e_a] + \frac{1}{\alpha}H(P_f|Q),
\] 
and the optimal utility from trading in the market is  
\begin{eqnarray*}\label{example2}
	E_Q[u(x^*)] = 1- e^{  - \alpha E_f[e_a]-H(P_f|Q)  }. 
\end{eqnarray*} 
Here, $H(P_f|Q):=E_f [\ln(\frac{\textnormal{d}P_f}{\textnormal{d}Q}) ] = E_Q\left[ e^{f(W_T)}f(W_T)  \right]<\infty$ denotes the relative entropy of $P_f$ with respect to  the Wiener measure $Q$.
\end{lemma}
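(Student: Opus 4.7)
The plan is to reduce the dynamic portfolio choice problem to a static optimization over the budget set $\mathcal{B}(f)$ using market completeness, then solve the static problem by a Lagrangian/duality argument tailored to the CARA utility.

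First I would justify reducing to the static problem: since the financial market is complete under the unique equivalent martingale measure $P_f$, the set of terminal wealths attainable from initial capital $E_f[e_a]$ via admissible strategies (in the $L^2$-sense assumed for $\xi$) coincides with $\mathcal{B}(f)$. The only delicate point here is verifying that the specific $x^{*}$ produced below is indeed replicable by an admissible $\xi$; this follows from the martingale representation theorem applied to the $P_f$-martingale $t \mapsto E_f[x^{*}\mid\mathcal{F}_t]$, together with the bounded density $dP_f/dQ = e^{f(W_T)}$ (since $f$ is bounded with compact support), which ensures the corresponding $\xi$ satisfies the $L^2(Q)$ integrability requirement. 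This admissibility/attainability check is what I expect to be the main technical obstacle.

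Second, I would solve the static problem
\[
\max_{x \in L^1(Q)} E_Q\bigl[1 - e^{-\alpha x}\bigr] \quad \text{subject to} \quad E_f[x] \leq E_f[e_a].
\]
Since $u$ is strictly concave and the constraint is linear, the Lagrangian has a unique saddle point. The first-order condition $\alpha e^{-\alpha x} = \lambda \, dP_f/dQ$ yields
\[
x = -\frac{1}{\alpha}\ln\!\Bigl(\frac{dP_f}{dQ}\Bigr) - \frac{1}{\alpha}\ln(\lambda/\alpha).
\]
Imposing the binding budget constraint $E_f[x] = E_f[e_a]$ and using $E_f[\ln(dP_f/dQ)] = H(P_f\mid Q)$ fixes the constant and gives the announced formula for $x^{*}$.

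Third, substituting $x^{*}$ back I get
\[
e^{-\alpha x^{*}} = \frac{dP_f}{dQ}\, e^{-\alpha E_f[e_a] - H(P_f\mid Q)},
\]
so taking $E_Q$ and using $E_Q[dP_f/dQ] = 1$ yields $E_Q[u(x^{*})] = 1 - e^{-\alpha E_f[e_a] - H(P_f\mid Q)}$. Finiteness of $H(P_f\mid Q) = E_Q[e^{f(W_T)} f(W_T)]$ is immediate from the uniform boundedness of $f \in \bar{\mathbb{F}}$ together with the identity $H(P_f\mid Q) = E_f[f(W_T)]$. Finally I would confirm $x^{*} \in L^1(Q)$: since $\ln(dP_f/dQ) = f(W_T)$ is bounded, $x^{*}$ differs from a constant by a bounded random variable, hence lies in $L^1(Q)$, and optimality among all of $\mathcal{B}(f)$ follows from concavity and the fact that the Lagrangian admits a saddle point at $(x^{*},\lambda^{*})$.
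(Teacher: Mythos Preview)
Your argument is correct and is precisely the standard Lagrangian/duality computation for CARA utility in a complete market. The paper itself does not give a proof of this lemma; it simply states that the result ``can be inferred from \cite[Chapter 3, Example 3.37]{FS}'', so your write-up is in fact more detailed than what appears in the paper, but follows the same route as the cited reference.
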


{The density $\frac{dP_f}{dQ}$ corresponds to the private state price density of agent $f$. Thus, Lemma \ref{x*} clarifies  how the optimal payoff depends on the distance between $P_f$ and $Q$. } 

Let ${\cal X} \subset L^1(Q)$ be a closed set of uniformly integrable financial positions $x: \Omega \to \mathbb{R}$. 
By the Dunford-Pettis theorem, this is equivalent to ${\cal X}$ being $\sigma(L^1(Q),L^\infty(Q))$-compact. We take ${\cal X}$, equipped with the $\sigma(L^1(Q),L^\infty(Q))$ topology as the contract space: 
\[
	C = {\cal X}.
\]
When offered a payoff $x \in {\cal X}$ the agent's income before trading is $e_a + x$. It follows from the above lemma that his optimal optimal utility after trading in the financial market, that is, his indirect utility function is given by 
\[ 
	U\left( f,x \right) = 1- e^{  - \alpha E_f[e_a+x]-H(P_f|Q)  }.
\]
Since all functions $f \in \bar{\mathbb F}$ are uniformly bounded it follows from the dominated convergence theorem that the mapping $f \mapsto H(P_f|Q)$ is continuous. As a result, 
the mapping 
\[
	\left( f,x \right) \mapsto U\left( f, x \right)
\] 
on $\Theta \times {\cal X}$ is jointly continuous. A direct computation shows that a mechanism $f \mapsto x_{f}$ is incentive compatible if and only if
\[
	E_f[x_{f}- x_{f'}] \geq 0,\quad \mbox{ for all } f,f'\in\bar{\mathbb F}. 
\]
We assume that at least one {individually rational} incentive compatible mechanism exists. The principal's utility from transacting with  agent  $f$ is
\[
	V(x_f) = E_Q[v(e_P-x_f)].
\] 
Since $v$ is concave, $V$ is upper semicontinuous, and thus, it follows from Theorem \ref{4a thm existence optimal mech} that the principal's optimization problem  to  find a mechanism $x: \bar{\mathbb{F}} \to C$ that maximizes
\[
 \inf_{\kappa\in\mathbb K}\, \left\{ \int_{  \bar{\mathbb{F}}} V(x_f) \, \textnormal{d}\kappa(f) + \alpha(\kappa) \right\}
\]
subject to the usual incentive compatibility, individual rationality, and measurability condition, has a solution. {Furthermore, owing to Section \ref{abstract result menu}, the principal achieves in fact the same optimal utility level by either proposing a suitable contract to each type of agent (where indirect utilities are specified), or by proposing a suitable set of contracts (a menu) and letting agents decide for themselves according to their indirect utilities. The same applies to the next application.}

 \subsubsection{A model of optimal portfolio delegation}  

In the previous applications, the principal's utility function was independent of the agent type $f \in \bar {\mathbb{F}}$. In this section, we consider a simple model of optimal portfolio delegation in which the principal's utility function depends on the agent type. We now assume that the agent's Bernoulli utility function is 
\[
	u(y)=\ln(y).
\] 
We retain the assumption that the drift of the stock price process is private information to the agent. The budget set of an agent of type $f$ is the same as in (\ref{budget}). The following result can again be inferred from \cite[Chapter 3, Example 3.43]{FS}:

\begin{lemma}\label{x**}
The optimal attainable payoff over all admissible trading strategies for an agent of type $f$ in the budget set \eqref{budget} is  
$$
	x^* = E_f[e_a]\frac{\textnormal{d}Q}{\textnormal{d}P_f},
$$ 
and the optimal utility from trading in the market is  
\begin{eqnarray*}\label{example2}
	E_Q[u(x^*)] = \ln E_f[e_a] + H(Q|P_f) = \ln E_f[e_a] - E_Q[f(W_T)].
\end{eqnarray*} 
\end{lemma}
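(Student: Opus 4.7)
The plan is to solve the constrained utility maximization problem by convex duality, leveraging strict concavity of $\ln$ to pin down a unique maximizer. Since the market is complete from the agent's viewpoint (a single equivalent martingale measure $P_f$ exists), the standard identification between attainable terminal payoffs of admissible strategies and elements of the static budget set $\mathcal{B}(f)$ reduces the dynamic problem to a static one: maximize $E_Q[\ln x]$ over $x\in L^1(Q)$, $x>0$, subject to $E_f[x]\leq E_f[e_a]$. Note $E_f[e_a]<\infty$ because $\tfrac{dP_f}{dQ}=e^{f(W_T)}$ is bounded (since $f\in\bar{\mathbb F}$ is uniformly bounded) and $e_a\in L^\infty(Q)$.

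First I would rewrite the budget constraint as $E_Q\!\left[x\cdot\tfrac{dP_f}{dQ}\right]\leq E_f[e_a]$ and form the Lagrangian
\[
L(x,\lambda)=E_Q[\ln x]-\lambda\bigl(E_Q[x\tfrac{dP_f}{dQ}]-E_f[e_a]\bigr).
\]
Pointwise maximization yields the first-order condition $\tfrac{1}{x}=\lambda\tfrac{dP_f}{dQ}$, hence $x^*=\tfrac{1}{\lambda}\tfrac{dQ}{dP_f}$. Binding the constraint (it must bind since $\ln$ is strictly increasing) gives $\tfrac{1}{\lambda}E_Q[\tfrac{dP_f}{dQ}\cdot\tfrac{dQ}{dP_f}]=E_f[e_a]$, i.e.\ $\lambda=1/E_f[e_a]$, so $x^*=E_f[e_a]\tfrac{dQ}{dP_f}$. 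Boundedness of $\tfrac{dQ}{dP_f}=e^{-f(W_T)}$ ensures $x^*\in L^\infty(Q)\subset L^1(Q)$, and clearly $x^*>0$.

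To verify optimality rigorously, I would use the tangent-line inequality for concave $\ln$: $\ln x\leq \ln x^*+\tfrac{1}{x^*}(x-x^*)$ for all $x>0$. Integrating against $Q$ and substituting $\tfrac{1}{x^*}=\tfrac{1}{E_f[e_a]}\tfrac{dP_f}{dQ}$ gives
\[
E_Q[\ln x]\leq E_Q[\ln x^*]+\tfrac{1}{E_f[e_a]}\bigl(E_f[x]-E_f[e_a]\bigr)\leq E_Q[\ln x^*]
\]
for every $x\in\mathcal{B}(f)$, proving that $x^*$ is optimal. Finally I would compute the optimal utility directly:
\[
E_Q[\ln x^*]=\ln E_f[e_a]+E_Q\!\left[\ln\tfrac{dQ}{dP_f}\right]=\ln E_f[e_a]-E_Q[f(W_T)],
\]
using $\tfrac{dP_f}{dQ}=e^{f(W_T)}$, and identifying $H(Q|P_f)=E_Q[\ln\tfrac{dQ}{dP_f}]=-E_Q[f(W_T)]$ yields the two stated forms.

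The only genuine subtlety—rather than an obstacle—is the passage from admissible trading strategies to the static budget set $\mathcal{B}(f)$; this is the standard completeness argument in a one-dimensional Brownian filtration with an equivalent martingale measure $P_f$ (cf.\ \cite[Chapter 3]{FS}), and every $x\in L^1(P_f)$ satisfying $E_f[x]\leq E_f[e_a]$ is replicable, so the supremum over admissible strategies coincides with the supremum over $\mathcal{B}(f)$.
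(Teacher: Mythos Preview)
Your argument is correct and is precisely the standard Lagrangian/tangent-line verification for log-utility maximization under a single pricing measure. The paper does not supply its own proof of this lemma---it simply states that the result ``can again be inferred from \cite[Chapter 3, Example 3.43]{FS}''---and your derivation is exactly what that reference contains, so there is nothing further to compare.
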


{The principal, considered an investor,  outsources} her portfolio selection to a manager (the agent) who has private information about the financial market dynamics and whose investment decisions in the above-specified financial market  {cannot be monitored}. Following \cite{OY} and \cite{backhoff2016conditional}, we restrict the class of admissible contracts {to be of an affine linear form}: the principal can offer the agent some  {state dependent payoff $x$ plus a $\beta$-fraction} of the gains or losses from trading. In particular, the agent is liable for possible losses.  {Formally,} a contract consists of some ${\cal F}$-measurable random variable $x$,  that we again assume to belong to some $\sigma(L^1(Q),L^\infty(Q))$-compact set ${\cal X}$ and some number $\beta \in [0,1]$, the proportion of wealth the agent can keep. The contract space
\[
	C = {\cal X} \times [0,1],
\]
is equipped with the product topology. When offered a contract $(x,\beta)\in C$, the agent's income from an admissible trading strategy $\xi$ is 
\[
	e_a + x + \beta \int_0^T \xi_t \textnormal{d} S_t.
\]
By Lemma \ref{x**}, since asset prices are martingales under measure $P_f$ the agent's optimal utility after trading in the market can be described by the continuous indirect utility function
\[
	U(f,x) = \ln E_f[e_a+x] + H(Q|P_f).
\]

{
In particular, the agent's optimal utility is independent of the performance part  $\int \xi dS$ of his contract\footnote{This has previously been observed in \cite{OY} and \cite{backhoff2016conditional}.}. Hence, and as in Subsection 4.2.1,  a mechanism $f \mapsto \varphi(f)=(x_{f},\beta_f)$ is  incentive compatible if and only if $	E_f[x_{f}-x_{f'}]\geq 0 $ for all $ f,f'\in\bar{\mathbb F}$. }

Although the agent's optimal utility is independent of the performance part of his contract, the optimal trading strategy $\xi^*$ and thus  the income $w^*$ to the principal  depend on $f$. Since the optimal claim for the agent can be decomposed as   $	x^* =  x + e_a + \beta \int_0^T \xi^*_t \textnormal{d} S_t$ 
the optimal gains from trading are given by 
\[
	\int_0^T \xi^*_t \textnormal{d}S_t = \frac{1}{\beta} \left( E_f[e_a+x] \frac{\textnormal{d}Q}{\textnormal{d}P_f} -x-e_a \right)
\]
and hence, the principal's income from {outsourcing the portfolio selection} is given by  
\[
	w^* = \frac{1-\beta}{\beta}\left( E_f[e_a+x] \frac{\textnormal{d}Q}{\textnormal{d}P_f} - (e_a+x) \right).
\]
Hence, unlike in the previous section, the principal's utility function now depends on the type of the agent with whom she interacts. Her utility when interacting with an agent of type $f$  and offering a contract $(x,\beta)\in C$ is given by
\[
	V\left(f, (x,\beta) \right) = E_{Q}\left[ v\left( e_p - x +  \frac{1-\beta}{\beta}\left( E_f[e_a+x] \frac{\textnormal{d}Q}{\textnormal{d}P_f} - (e_a+x) \right) \right)\right]. 
\]
This function is jointly upper semicontinuous. We can now use the same arguments as in Subsection 4.2.1 to conclude   that the principal's optimization problem  of finding a mechanism  $(x,\beta): \bar{\mathbb{F}} \to C$ that maximizes
\[
 \inf_{\kappa\in\mathbb K}\, \left\{ \int_{\bar{\mathbb{F}}} V(f,(x_f,\beta_f)) \, \textnormal{d}\kappa(f) + \alpha(\kappa) \right\}
\]
subject to the   incentive compatibility,  individual rationality, and measurability condition, has a solution (if a feasible mechanism exists) {by Theorem \ref{4a thm existence optimal mech}.}


\section{ Concluding Remarks}\label{concl}

We considered a very general setting for a mechanism design problem in the presence of adverse selection. Under mild hypotheses we proved that optimal contracts exist and that centralized contracting implemented via contract mechanisms is equivalent to delegated contracting implemented via contract menus. The guiding principle of our work was to use the utility levels of the agent as the relevant contract variables. When doing so, the agent's utility function is given by a bilinear form. This does not only significantly simplify the incentive compatibility condition but also yields a natural duality between the type and the contract type. Simultaneously, it naturally results in a framework in which the relevant topological spaces for the contracting problem may lack metrizability (and possibly separability too), as we illustrated with various examples. {It would be interesting to derive further insights into the structure of optimal contracts and to quantify the impact of robustness or ambiguity on optimal contracts as in \cite{auster2018robust} or \cite{song2018efficient}. In view of the many facets ambiguity adds to optimal contracting problems, a formal analysis of the characteristics of equilibrium contracts in our general environment does not seem appropriate. Conditions for pooling or separating equilibrium 
are relevant for many applications but these questions are best analyzed in a less general framework or even on a case-by-case basis.} 

{ We conclude this paper with a couple of technical comments.} Our study was greatly influenced by the works \cite{Page1992,Page1997}, and in particular, by Page's idea of considering contract menus rather than mechanisms. However, the lack of metrizability precludes us from applying the results therein, because it rules out the use of measurable selection results. For instance, in \cite{Page1992}, the contract space is given by sequentially compact subsets of $L^\infty$. Under an additional separability assumption that would not be needed in our setting, the contract space is metrizable. Compared with \cite{Page1992}, we reverse the role of the contract and type space: in all our examples, the contract spaces are subsets of $L^1$, which is very natural when the agent is an expected utility maximizer, whereas the type spaces are subsets of $L^\infty$. The choice of norm-bounded, respectively compact, type spaces in $L^\infty$ is an immediate consequence of the fact that the agent's utility function is given by a bilinear form after the transformation of variables.    

To the best of our knowledge, the existence of mechanisms does not follow from existing Komlos-type results used or established in other studies, such as \cite{Page1991,Balder1996,balder1990new,BalderHess1996}. Our setting does not seem to fulfill the hypotheses needed for such arguments. Under various topological assumptions on the contract space, it has been shown that if the principal knows the distribution $\mu$ of the agent types and if the set of incentive compatible mechanisms is convex,\footnote{This is the case if the contract space is convex and the agent's utility function is affine. The latter is guaranteed if mixed contracts are considered.} then a sequence $\{\varphi_k\}$ of strongly measurable incentive compatible mechanisms admits an almost surely Cesaro convergent subsequence, that is, a subsequence $\{n_k\}$ exists such that 
\[
	\frac{1}{n_k} \sum_{i=1}^{n_k}\varphi_i(\theta) \to \varphi^*(\theta) \quad \mbox{$\mu$-a.s. as } k \to \infty.
\]   
Our setting does not seem to fulfill the hypotheses needed for such arguments. More strikingly, our framework allows to consider very general sets of beliefs about the type distribution. By contrast, Komlos-type arguments \textsl{always} need a reference distribution w.r.t.\ which the principal's beliefs are absolutely continuous. Such a reference distribution does not exist if the principal evaluates her performance according to a worst-case approach w.r.t. the agent's type.  A further advantage of the approach used here is that we show that suitably-measurable contract mechanisms, and contract menus, are largely equivalent even though we cannot use measurable selection arguments. In other words, we do not require mechanisms to be measurable functions; only the principal's utility from implementing these mechanisms needs to be measurable.

\begin{appendix}

\section{  Abstract Results} \label{abstract result}

\subsection{The Fell Topology}
We denote by $C$ a compact Hausdorff topological space and recall that $\CL(C)$ stands for the family of all non-empty closed subsets of $C$. 
{
\begin{definition}\label{def Fell}
The {\em Fell topology} on $\CL(C)$, which we denote $\tau_F$, is the topology generated by the subbase consisting of all sets of the form
$$ V^- \ := \ \{A \in \CL(C) \, : \, A \cap  V\neq \emptyset \}, $$
and
$$W^+ \ := \ \{A \in \CL(C) \, : \, A \subset  W  \},$$
where $V$ and $W$ are non-empty open subsets of $C$.
\end{definition}

The definition of Fell topology when  $C$ is only Hausdorff can be found in \cite[Definition 5.1.1.]{be93}, and it reduces to Definition \ref{def Fell} in the present case of $C$ being compact.

Let us now recall the notions of lower and upper closed limits for nets of sets. For a net $\{A_\lambda\}_{\lambda \in \Lambda}$ in $\CL(C)$, let $\Li(A_\lambda)$ denote the set of all limit points of $\{A_\lambda\}_{\lambda \in \Lambda}$.
These are the points $c\in C$ such that each neighborhood of $c$ intersects $A_\lambda$ for all $\lambda$ in some residual subset of $\Lambda$. By contrast, $\Ls(A_\lambda)$ denotes the set of all cluster points of $\{A_\lambda\}_{\lambda \in \Lambda}$.
These are the points $c\in C$ such that each neighborhood of $c$ intersects $A_\lambda$ for all $\lambda$ in some cofinal subset of $\Lambda$. We always have $\Li(A_\lambda) \subset \Ls(A_\lambda)$. 

Since $C$ is a compact Hausdorff space, convergence w.r.t.\ the Fell topology and the notion of Kuratowski-Painlev\'{e} convergence of nets coincide \cite[Theorem 5.2.6]{be93}. 
Hence, we focus on the latter convergence, which is easier to apply in the proofs.

\begin{definition}
Let$\{A_\lambda\}_{\lambda \in \Lambda}$ be a net in $\CL(C)$ and $A \in \CL(C)$.
We say that $\{A_\lambda\}_{\lambda \in \Lambda}$ converges in the {\em Kuratowski-Painlev\'{e} sense} to $A$ (denoted $A=\, ^K\lim_{\lambda} A_\lambda$) if $ \Li(A_\lambda) \ = \ A \ = \ \Ls(A_\lambda)$.
\end{definition}

}

\subsection{Semicontinuity of an integral functional}

We made use of the following result, which is nontrivial in the present nonmetrizable setting:

\begin{lemma}
\label{lem Fat}
Let $X,Y$ be compact Hausdorff spaces, $\lambda$ a regular probability measure on $\B(X)$, and $g:X\times Y\to \mathbb{R}$ a jointly upper semicontinuous function. Then, 
$$Y \ni y\mapsto \int_X g(x,y)d\lambda(x),$$
is well-defined and upper semicontinuous.
\end{lemma}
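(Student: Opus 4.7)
The plan is to realize $F(y):=\int_X g(x,y)\,d\lambda(x)$ as the pointwise infimum on $Y$ of a family of continuous functions, which will directly yield upper semicontinuity; the candidates will come from jointly continuous majorants of $g$ on the product space. To begin, since $g$ is jointly USC on the compact Hausdorff space $X\times Y$ it is bounded above by some $M<\infty$, and each slice $x\mapsto g(x,y)$ is USC on $X$, hence Borel measurable and bounded above, so $F(y)\in[-\infty,M]$ is well-defined. Setting $g_n:=g\vee(-n)$ produces a decreasing sequence of jointly USC bounded functions with $g_n\searrow g$; monotone convergence gives $F_n(y):=\int g_n(x,y)\,d\lambda(x)\searrow F(y)$, so $F=\inf_n F_n$ is USC provided each $F_n$ is. Hence I may assume $-K\le g\le M$ throughout.

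Introduce the family $\mathcal H:=\{H\in C(X\times Y):H\ge g\}$. Clipping $H\mapsto H\wedge(M+1)$ renders it uniformly bounded, and it is downward filtered since $H_1\wedge H_2\in\mathcal H$. I would next verify $\inf_{H\in\mathcal H}H(z)=g(z)$ pointwise on $X\times Y$: for $z_0$ and $\varepsilon>0$, the closed set $A:=\{g\ge g(z_0)+\varepsilon\}$ avoids $z_0$, and Urysohn's lemma on the normal space $X\times Y$ yields $\phi\in C(X\times Y,[0,1])$ with $\phi(z_0)=0$ and $\phi\equiv 1$ on $A$; then $H:=(g(z_0)+\varepsilon)(1-\phi)+M\phi$ belongs to $\mathcal H$ with $H(z_0)=g(z_0)+\varepsilon$. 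Moreover, each $F_H(y):=\int H(x,y)\,d\lambda(x)$ is continuous on $Y$, since the continuous function $H$ on the compact Hausdorff product is uniformly continuous: for each $\varepsilon>0$ a standard joint-continuity plus finite-subcover argument produces a neighborhood of $y_0$ on which $|H(x,y)-H(x,y_0)|<\varepsilon$ uniformly in $x$.

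Fix $y_0\in Y$. Then $\{H(\cdot,y_0):H\in\mathcal H\}$ is a downward filtered net of uniformly bounded continuous functions on $X$ with pointwise infimum $g(\cdot,y_0)$. The crucial identity
\[
F(y_0)\;=\;\inf_{H\in\mathcal H}F_H(y_0)
\]
follows from the $\tau$-additivity of the Radon measure $\lambda$ on the compact Hausdorff space $X$: the property $\lambda(U)=\sup_\alpha\lambda(U_\alpha)$ for every upward directed open cover $(U_\alpha)$ of $U$ is immediate from inner regularity together with the finite-subcover property of compact sets, and its functional form asserts that integration commutes with pointwise suprema along increasing filtered nets of non-negative LSC functions. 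Applied to $C-H(\cdot,y_0)$ (increasing in $H$, LSC, non-negative for $C$ large enough) this yields the displayed identity. Since the argument works at every $y$, one obtains $F=\inf_{H\in\mathcal H}F_H$ on all of $Y$: a pointwise infimum of continuous functions, hence USC.

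The principal obstacle is this last step: interchanging integral and infimum along an \emph{uncountable} directed net. Standard monotone or dominated convergence for sequences does not apply, and for general nets such interchanges can fail. The indispensable ingredient is $\tau$-additivity of Radon measures on compact Hausdorff spaces. A secondary subtlety is that one cannot obtain suitable continuous majorants by first constructing them on $X$ and then lifting them to the product; the joint upper semicontinuity of $g$ is essential in order to invoke Urysohn on $X\times Y$ and generate a sufficiently rich family of \emph{jointly} continuous majorants, which is precisely what makes the argument succeed in the nonmetrizable setting motivating this paper.
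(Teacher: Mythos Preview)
Your proposal is correct and follows essentially the same route as the paper: represent $g$ as the pointwise infimum of its jointly continuous majorants on $X\times Y$, show that $y\mapsto\int H(x,y)\,d\lambda(x)$ is continuous for each such $H$ via a finite-subcover uniform-continuity argument, and then interchange infimum and integral using the regularity (equivalently, $\tau$-additivity) of $\lambda$ to conclude that $F$ is a pointwise infimum of continuous functions. The only differences are cosmetic---the paper cites Bourbaki for the approximation step and an external proposition for the interchange, whereas you supply both directly via Urysohn's lemma and the $\tau$-additivity argument.
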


\begin{proof}
Compact Hausdorff spaces are completely regular, and in a completely regular space every finite upper semicontinuous function equals the pointwise infimum of its continuous majorants:
$$g(x,y)=\inf_{c\in S}c(x,y), \quad S:=\{c :\, c\geq g \text{ everywhere, }c \text{  continuous} \};$$
see for example \cite[Proposition 7, No 7, Chapter IX.10]{Bourbaki58}.  Let us assume for the moment\footnote{Actually, this follows immediately from the fact that the topologies on $X$ and $Y$ are generated by uniformities: we prefer to give self-contained arguments, perhaps paving the way for future extensions.} that for $c:X\times Y\to \mathbb{R}$ continuous we have
\begin{align}\label{eq assume conti partial}
y_\alpha \to y \mbox{ implies } \sup_x|c(x,y_\alpha) - c(x,y)|\to 0.
\end{align}
From the duality of continuous functions and finite measures, we find that
$y\mapsto \int c(x,y)d\lambda(x),$
is continuous. Next, note that set $S$   is downwards directed, and hence, in particular, it can be seen as a decreasing net. In line with \cite[Proposition 2.13]{BWMajor}, where the regularity of $\lambda$ is needed, applied to the (upper-bounded) upper semicontinuous function $g$, we easily deduce
 $$\int_X g(x,y)d\lambda(x) =\int_X \left ( \inf_{c\in S} c(x,y)\right )d\lambda(x)= \inf_{c\in S}  \int_X  c(x,y)d\lambda(x).$$
Thus, the function on the l.h.s.\ is an infimum of continuous functions and therefore upper semicontinuous.
 
 Let us return to \eqref{eq assume conti partial}, which would be trivial in the metrizable setting. We fix $y$ and let $\epsilon>0$. For each $x$ we have by continuity the existence of neighborhoods $U_x^X$ and $V_x^Y$ of $x$ and $y$ respectively, such that
 $$|c(\bar{x},\bar{y}) - c(x,y)|\leq \epsilon/2 \mbox{ for all } \bar{x}\in U_x^X, \forall\bar{y}\in V_x^Y.$$
 We now cover $X\times \{y\}$, a compact, with $\bigcup_{x\in X}U_x^X\times V_x^Y$. Therefore, we obtain $x_1,\dots,x_n$ such that
 $$X\times \{y\} \subset\bigcup_{i\leq n} U_{x_i}^X\times V_{x_i}^Y,$$
 and we introduce $V:= \cap_{i\leq n} V^Y_{x_i}$. This is an open set containing $y$, and thus,
 $$X\times \{y\} \subset\bigcup_{i\leq n} U_{x_i}^X\times V.$$
 Therefore, given $\bar{x}$ arbitrary and $\bar{y}\in V$, there is some $i\leq n$ such that $(\bar{x},\bar{y})\in U_{x_i}^X\times V$ and thus
 \begin{eqnarray*}
 |c(\bar{x},y)- c(\bar{x},\bar{y})|&\leq&  |c(\bar{x},y)- c(x_i,y)|+  |c(x_i,y)- c(\bar{x},\bar{y})|\\
&\leq &\epsilon/2+\epsilon/2.
 \end{eqnarray*}
 This shows that  $\sup_{x\in X}|c(x,y)-c(x,\bar{y})|\leq \epsilon$ for all $\bar{y}\in V$, as desired.$\hfill \square$
\end{proof}

{
\begin{corollary}
\label{cor Fat}
Let $X$ be a Hausdorff space, and let  $Y$ be a compact Hausdorff space. Let  $\lambda$ a regular probability measure on $\B(X)$, and $g:X\times Y\to \mathbb{R}$ a jointly upper semicontinuous bounded function. Then 
$$Y \ni y\mapsto \int_X g(x,y)d\lambda(x),$$
is well-defined and upper semicontinuous.
\end{corollary}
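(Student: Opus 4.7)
My plan is to bootstrap from Lemma \ref{lem Fat}, which handles the case of compact $X$, by combining it with the inner regularity of $\lambda$ and the boundedness of $g$.

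First, I note that the integral is well-defined: since $g$ is jointly upper semicontinuous, for each fixed $y \in Y$ the slice $x \mapsto g(x,y)$ is upper semicontinuous on $X$, hence Borel measurable. Together with the assumed bound $|g| \leq M$ for some $M < \infty$ and finiteness of $\lambda$, this gives integrability, and also lets us truncate the integral over any Borel set by $M \lambda(\cdot)$.

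To prove upper semicontinuity at a fixed $y_0 \in Y$, I would proceed as follows. Given $\varepsilon>0$, use inner regularity of $\lambda$ to pick a compact set $K \subset X$ with $\lambda(X \setminus K) < \varepsilon/(4M+1)$. The restriction $g|_{K\times Y}$ is still jointly upper semicontinuous, and $(K,\tau^X|_K)$ is a compact Hausdorff space. I can then apply Lemma \ref{lem Fat} — either by normalizing $\lambda|_K$ to a probability measure on $K$ and rescaling, or by observing that the proof of Lemma \ref{lem Fat} only uses duality between continuous functions and finite regular Borel measures and thus extends verbatim to any finite regular Borel measure. This yields a neighborhood $V$ of $y_0$ in $Y$ such that
\[
\int_K g(x,y) \, d\lambda(x) \ \leq \ \int_K g(x,y_0) \, d\lambda(x) + \varepsilon/2, \quad \text{for all } y\in V.
\]

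Next I split the full integral as $\int_X = \int_K + \int_{X\setminus K}$. On $X\setminus K$, boundedness of $g$ yields
\[
\int_{X\setminus K} g(x,y)\, d\lambda(x) \leq M\lambda(X\setminus K), \qquad \int_{X\setminus K} g(x,y_0)\, d\lambda(x) \geq -M\lambda(X\setminus K).
\]
Combining, for $y \in V$,
\[
\int_X g(x,y)\,d\lambda(x) \ \leq \ \int_X g(x,y_0)\,d\lambda(x) + \varepsilon/2 + 2M\lambda(X\setminus K) \ \leq \ \int_X g(x,y_0)\,d\lambda(x) + \varepsilon,
\]
which establishes the desired upper semicontinuity.

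The only mildly delicate point is verifying that Lemma \ref{lem Fat} applies with $\lambda|_K$ in place of a probability measure on a compact space. The normalization trick handles this cleanly when $\lambda(K)>0$ (which holds for $\varepsilon$ small enough, unless $\lambda$ is trivial — but a trivial $\lambda$ makes the claim vacuous). Hence this is more bookkeeping than a genuine obstacle, and no additional topological subtleties beyond those already overcome in Lemma \ref{lem Fat} are needed.
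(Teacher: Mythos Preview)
Your proof is correct and uses essentially the same approach as the paper: inner regularity to reduce to compact subsets of $X$, Lemma \ref{lem Fat} on those compacts, and boundedness of $g$ to control the tail. The paper organizes this via an increasing sequence of compacts $X_n$ with $\lambda(X_n)\uparrow 1$ and then shows $\|G_n-G\|_\infty\to 0$, whereas you work at a fixed $y_0$ with a single compact $K$ and a direct $\varepsilon$-estimate---these are minor stylistic variants of the same argument.
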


\begin{proof}
Let $(X_n)$ be an increasing sequence of compact subsets of $X$ such that {$\lambda(X_n) \uparrow 1$, the existence of which is guaranteed by the (inner) regularity of $\lambda$}. By Lemma \ref{lem Fat}, the mappings
\[
	G_n(y) := \int_{X_n} g(x,y)d\lambda(x)
\]
are well-defined and upper semicontinuous for each $n \in \mathbb N$. {Subtracting from $g$ its lower bound, we may assume that $g$ is non-negative.} For any $y$ and $\lambda$ a.e.\ $x$ we have $1_{X_n}(x)g(x,y)\nearrow g(x,y)$, and hence, by sequential monotone convergence 
\[
	\lim_{n \to \infty} G_n(y)= \sup_n G_n(y) =  \int_{X} g(x,y)d\lambda(x)=:G(y) . 
\]  
Thus, $G$ is well-defined, and its upper semicontinuity remains to be proved. Since $g$ is bounded, we have $\| G_n - G\|_\infty \to 0$ as $n \to \infty$. Hence, if $y_\alpha\to y$ is a net, then for any $\epsilon>0$ and $n=n(\epsilon)$ big enough
$$\limsup G(y_\alpha)\leq\epsilon+ \limsup G_n(y_\alpha)\leq \epsilon+ G_n(y)\leq \epsilon+ G(y),$$
which concludes the proof.$\hfill \square$
\end{proof}

}

\section{Proofs} \label{proofs}

We now present the proof of Lemma \ref{lem Fell comp}, which actually follows by \cite[Proposition 5.1.2 and Exercise 5.1.4(a)]{be93}. For the reader's convenience, we provide the complete argument:

\begin{proof}{\it of Lemma \ref{lem Fell comp}}
Since $C$ is compact Haussdorf, for each $A_1,A_2\in \CL(C)$ disjoint, we can find $U_1,U_2\in \tau^C$ disjoint such that $A_i\subset U_i$. Then, $U_1^+$ and $U^+_2$ are disjoint open neighborhoods in $\tau_F$ of $A_1$ and $A_2$ respectively. Thus $\tau_F$ is Hausdorff.

To prove that $\tau_F$ is compact, we follow \cite[Theorem 5.1.3]{be93}. Let $\{V_\lambda:\lambda\in\Lambda\}$ and $\{W_\sigma:\sigma\in\Sigma\}$ be two families of non-empty open sets of $C$, such that 
$$
\CL(C)=\{ V^-_\lambda:\lambda\in\Lambda \}\cup \{W^+_\sigma:\sigma\in\Sigma\}.$$ 
By the Alexander subbase theorem, it suffices to check the existence of a finite subcovering for this kind of coverings, to obtain proof of compactness of $\tau_F$. Note that if $\Lambda$ is empty, then $C\in \cup_\sigma W^+_\sigma$, therefore, for some $\bar{\sigma}\in\Sigma$ we must have $W_{\bar{\sigma} }=C$ and consequently $W_{\bar{\sigma} }^+=\CL(C)$ is a finite subcovering. Hence, we now assume that $\Lambda$ is non-empty. If $\Sigma$ is empty, then $\forall c\in C:\{c\}\in \cup_\lambda V_\lambda^-$, and thus, these $V_\lambda$'s form an open covering of $C$. Then, we find $C= \cup_{i=1}^n V_{\lambda^n}$ and get 
$$
\CL(C)= \cup_{i=1}^n V_{\lambda^n}^- 
$$
 is a finite subcovering. 

Finally, if both $\Lambda$ and $\Sigma$ are non-empty, some $\sigma_0\in \Sigma$ must exist such that $(W_{\sigma_0})^c\subset \cup_\lambda V_\lambda$. Indeed, if this was not the case, we may choose $c_\sigma\in (W_{\sigma})^c\backslash \cup_\lambda V_\lambda$ for each $\sigma$, and then the closed set $\overline{\{c_\sigma:\sigma\in\Sigma\}}^\tau$ intersects no $V_\lambda$ and is not contained in any $W_\sigma$, contradicting the covering assumption. Since $(W_{\sigma_0})^c$ is compact, it follows $(W_{\sigma_0})^c\subset \cup_{k=1}^m V_{\lambda_k}$ and then clearly $W_{\sigma_0}^+\cup_k V_{\lambda_k}^-= \CL(C)$.$\hfill \square$
\end{proof}

\begin{proof}{\it of Proposition \ref{4a prop u star continuous}:}
Let $I: \Theta\times \CL(C) \twoheadrightarrow \Theta\times C$ be the \emph{identity} correspondence $$(\theta,D)\mapsto I(\theta,D):=\{\theta\}\times D.$$
It is easy to see that $I$ is a continuous correspondence (see \cite[Lemmata 17.4-17.5]{Aliprantis2006}) where the domain is given the topology $(\tau^\Theta\times\tau_F)$ and the range is given $(\tau^\Theta\times\tau^C)$. Since by Assumption \ref{4a assumption 1a}, function $U$ is jointly continuous, it follows by \cite[Berge Maximum Theorem 17.31]{Aliprantis2006} that $U^*$ is continuous too.$\hfill \square$
\end{proof}

\begin{proof}{\it of Proposition \ref{4a prop correspondence}:}
Since $D\in\CL(C)$ is compact, we have $\Phi(\theta,D)\neq\emptyset$ by the continuity of $U(\theta,\cdot)$. Further, since set $D\cap U(\theta,\cdot)^{-1}(\{U^*(\theta,D)\})\subset C$ is closed, it must be compact too, showing that $\Phi(\theta,D)$ is compact.

We now prove upper hemicontinuity. Since $C$ is compact, by \cite[17.11 Closed Graph Theorem]{Aliprantis2006} we need only check that the graph of $\Phi$ is closed.
Let $\{(\theta_\lambda,D_\lambda)\}_{\lambda\in\Lambda}$ be a net with $(\theta_\lambda,D_\lambda)\to(\theta,D)$, and let $f_\lambda\in \Phi(\theta_\lambda,D_\lambda)$ with $f_\lambda\to f$. We must show that  $f\in\Phi(\theta,D)$. Since $f$ is a limit point of $\{f_\lambda\}$, with $f_\lambda\in D_\lambda$ and $D=\Li(D_\lambda)$, we obtain $f\in D$. On the other hand, by definition
$$U(\theta_\lambda,f_\lambda)=U^*(\theta_\lambda,D_\lambda).$$
Since $U$ and $U^*$ are continuous (Proposition \ref{4a prop u star continuous}), we can go into the limit, finding
$$U(\theta,f)=U^*(\theta,D).$$
This finding and $f\in D$ conclude the proof.$\hfill \square$
\end{proof}

\begin{proof}{\it of Proposition \ref{4a prop v usc}:}
By Proposition \ref{4a prop correspondence}, the correspondence $(\theta,D) \mapsto \Phi(\theta,D)$ is upper hemicontinuous and has non-empty compact values. On the other hand, by Assumption \ref{4a assumption 1a}, the function $(\theta,f) \mapsto V(\theta,f)$ is upper semicontinuous. It follows that the mapping $ V^*(\cdot, \cdot)$ is upper semicontinuous; see \cite[Theorem 2 (p.\ 116)]{Berge1963} or \cite[Lemma 17.30]{Aliprantis2006}.$\hfill \square$
\end{proof}

\begin{proof}{\it of Theorem \ref{4a thm existence optimal set}:}
First, define $F^\kappa:\CL(C) \to \RR$ by $$F^\kappa(D) := \int_\Theta V^*(\theta,D) d\kappa(\theta),$$
for $\kappa \in \mathbb K $. We  have $ V^*(\theta,D)\leq \sup_{\theta\in \Theta,\,c\in C} V(\theta,c)$, the r.h.s.\ of which is finite by compactness and u.s.c.\ of  $V$. By Lemma \ref{lem Fat}, taking $g:=V^*$, we have that $F^\kappa$ is well-defined and, in fact, upper semicontinuous in $\CL(C)$. It follows that $\inf_{\kappa\in \mathbb K}F^\kappa(\cdot)$ is also upper semicontinuous. The set $T$ is non-empty and compact, see \eqref{eq def t}. The result follows.$\hfill \square$
\end{proof}

{
\begin{proof}{\it of Proposition \ref{prop-tight}:}
Since $V$ is assumed bounded, the proof follows from the same arguments as for Theorem \ref{4a thm existence optimal set}], using Corollary \ref{cor Fat} instead of Lemma \ref{lem Fat}. $\hfill \square$
\end{proof}
}

\begin{proof}{\it of Theorem \ref{4a thm existence optimal mech}:}
Let $D^*$ be the optimizer of \eqref{4a thm existence optimal set}. Since set $\Phi(\theta,D^*)$ is non-empty and compact for each $\theta$, we deduce that set 
$$\argmax_{d\in\Phi(\theta,D^*)}V(\theta,d)$$ is non-empty. By axiom of choice, we select $\varphi^*(\theta) \in\argmax_{d\in\Phi(\theta,D^*)}V(\theta,d),$ thus, by definition,
$$V(\theta,\varphi^*(\theta))=V^*(\theta,D^*),\text{ for all $\theta$}.$$
Observe that $V(\cdot,\varphi^*(\cdot))$ is measurable, because $V^*(\cdot,D^*)$ is measurable. In addition, since $\varphi^*(\theta)\in D^*$ for each $\theta$, we deduce in particular
$$
\varphi^*(\theta)\in\Phi(\theta,D^*),
$$
which yields that 
$$
U(\theta,\varphi^*(\theta))\geq U(\theta,\varphi^*(\theta ')), \mbox{ for all } \theta' \in \Theta.
$$
Finally, since $D^*\in T$ and $\varphi^*(\theta)\in\Phi(\theta,D^*)$ we conclude that $U(\theta,\varphi^*(\theta))\geq \underline{u}(\theta)$. Thus far, we have established that $\varphi^*\in\hat M$. Now, let $\varphi\in\hat M$ arbitrary, and let $D$ be the closure of $\varphi(\Theta)$. It follows that $\varphi(\theta) \in \Phi(\theta,D)$, since $\varphi$ is incentive compatible and by continuity of $U$. Thus $V(\theta,\varphi(\theta))\leq V^*(\theta,D)$ by definition. Since $\varphi$ is individually rational, it follows $D\in T$. All in all,
\begin{eqnarray*} 
\int_\Theta V(\theta,\varphi(\theta))d\kappa(\theta)&\leq &\int_\Theta V^*(\theta,D)d\kappa(\theta)\\
&\leq & \int_\Theta  V^*(\theta,D^*)d\kappa(\theta)=\int_\Theta  V(\theta,\varphi^*(\theta))d\kappa(\theta), 
\end{eqnarray*} 
from which $\varphi$ is optimal for Problem \eqref{eq prop main centralized}.$\hfill \square$

\end{proof}

\begin{proof}{\it of Lemma \ref{lemma-sigma-compact}:}
Without loss of generality, we may assume that $u\geq 0$. By \eqref{eq ass v Y} and the definitions of $v$ and ${\mathcal X}(e_a)^0$ we obtain that $C$ is bounded in $L^1(Q)$. By Eberlein-Smulian's theorem, it is enough to check sequential compactness. By contradiction suppose that $C$ is not $\sigma(L^1(Q),L^\infty(Q))$-relatively compact, and hence not uniformly integrable. As in the proof of \cite[Lemma 1]{Kramkov2003}, we obtain the existence of a sequence $\{g_n\}\subset{\mathcal X}(e_a)$, a sequence of disjoint measurable events $\{A_n\}$ and an $\alpha>0$ such that
$$E_Q[u(g_n)1_{A_n}]\geq \alpha.$$
Accordingly, if we define $H_k:=\sum_{n\leq k}u(g_n)1_{A_n}$ we find $E_Q[H_k]\geq k\alpha$. Note that as in \cite[Lemma 6.3]{Kramkov1999}, $u^*(y/2)\leq au^*(y)+b$, and thus, in particular,
$$\qquad u^*(y2^{-m})\leq a^m u^*(y)+(a^{m-1}+\dots+a^1+a^0)b.$$
Now, clearly 
$$H_k=u\left(\sum_{n\leq k}g_n1_{A_n}\right)\leq u^*(Y)+Y \sum_{n\leq k}g_n1_{A_n},$$
for every non-negative random variable $Y$. Let us temporarily assume that we could choose $Y^*$ independent of $k$ and such that for some $L<\alpha$:
\begin{align}\label{eq d 1}
E_Q\left[Y^*\sum_{n\leq k}g_n1_{A_n}\right]\leq Lk.
\end{align}
From the above computations we then have $k\alpha\leq E_Q[H_k]\leq E_Q[u^*(Y^*)]+Lk,$ 
and hence, if further 
\begin{align}\label{eq d 2}
E_Q[u^*(Y^*)]<\infty,
\end{align}
this would yield a contradiction with $L<\alpha$. 

We now show the existence of $Y^*$ fulfilling \eqref{eq d 1} and \eqref{eq d 2}. 
By Assumption \eqref{eq ass v Y}, there exists $y>0,Y\in {\mathcal X}(e_a)^0$ such that $E_Q[u^*(yY)]<\infty$.
 However, then $$E_Q[u^*(2^{-m}yY)]<\infty,$$
for all $m>0$. We now take $m$ large so that $L:=y2^{-m}<\alpha$ and define $Y^*:=LY$. Then
 $$E_Q\Big[Y^*\sum_{n\leq k}g_n1_{A_n}\Big]\leq E_Q\Big[Y^*\sum_{n\leq k}g_n\Big]\leq Lk.$$
 Hence $Y^*$ fulfills \eqref{eq d 1}-\eqref{eq d 2} as desired.
 
 It remains to show that $C$ is weakly closed. First we show that $C$ is convex. Since $u^{-1}$ is increasing and $u$ concave, for $X,\bar X\in {\mathcal X}(e_a)$ and $\beta\in[0,1]$ we have
 \begin{align*}
 	u^{-1}\left ( \beta u (X)+(1-\beta)u(\bar X) \right ) &\leq u^{-1}\left ( u(\beta X+(1-\beta)\bar X) \right )\\
	&=\beta X+(1-\beta)\bar X \,\in {\mathcal X}(e_a).
\end{align*}
Because ${\mathcal X}(e_a) $ is solid, then $u^{-1}\left ( \beta u (X)+(1-\beta)u(\bar X) \right )\in {\mathcal X}(e_a)$, and therefore $\beta u (X)+(1-\beta)u(\bar X)\in C$. Since $C$ is convex, it is weakly closed if and only if	 it is strongly closed. Let us recall 
the bipolar theorem of \cite[Theorem 1.3]{brannath1999bipolar}, which applies here since ${\mathcal X}(e_a)$ is convex, solid, and closed in $L^0(Q)$, that states
$$X\in {\mathcal X}(e_a) \iff  E_Q[XY]\leq 1 \mbox{ for all } Y\in  {\mathcal X}(e_a)^0.  $$
Now let $\{ U(X_n)\}_n \subset C$ converge in $L^1$-norm to $Z$, and let $Y\in  {\mathcal X}(e_a)^0$ be arbitrary. Then,
$$E_Q[Yu^{-1}\circ u(X_n)]=E_Q[YX_n]\leq 1,$$
and therefore, by Fatou's lemma $E_Q[Yu^{-1}(Z)]\leq 1$ too, and by the bipolar theorem $u^{-1}(Z)\in{\mathcal X}(e_a) $. Consequently, $Z\in C$, finishing the proof.$\hfill \square$
\end{proof}

\begin{proof}{\it of Lemma \ref{lemma-suff-cond}:}
Observe that ${\mathcal X}(e_a)\subset{\mathcal X}(\|e_a\|)$, and, as a consequence,
$$\forall X\in {\mathcal X}(\|e_a\|):\,E_Q[XY]\leq 1 \Rightarrow \forall X\in {\mathcal X}(e_a):\,E_Q[XY]\leq 1.$$
Assumption \eqref{eq ass finite u} allows us to apply \cite[Theorem 3.1]{Kramkov1999} to the pairing between ${\mathcal X}(\|e_a\|)$ and its polar. As a particular consequence, there exists $y\geq 0$ and $Y$ in the polar of ${\mathcal X}(\|e_a\|)$ such that $$E_Q[u^*(yY)]<\infty.$$
Since the polar of ${\mathcal X}(e_a)$ is larger, we conclude the proof. $\hfill \square$
\end{proof}

\end{appendix}

\begin{acknowledgements}
The authors would like to thank the anonymous referees and editors for their valuable advice.
\end{acknowledgements}

%

\bibliographystyle{plain}    


\bibliography{bib2019Juli}

\end{document}